\theoremstyle{plain}
\newtheorem{theorem}{Theorem}[section]
\newtheorem{prop}[theorem]{Proposition}
\newtheorem{lemma}[theorem]{Lemma}
\theoremstyle{definition}
\newtheorem{question}[theorem]{Question}
\newtheorem{definition}[theorem]{Definition}
\newcommand{\R}{\mathbb{R}}
\newcommand{\N}{\mathbb{N}}
\newcommand{\C}{\mathbb{C}}
\newcommand{\Lin}{\mathcal{L}}
\newcommand{\K}{\mathcal{K}}
\newcommand{\eps}{\varepsilon}
\newcommand{\lam}{\lambda}
\DeclareMathOperator{\dist}{dist}
\DeclareMathOperator{\re}{Re}
\DeclareMathOperator{\co}{co}
\DeclareMathOperator{\NA}{NA}
\DeclareMathOperator{\MA}{MA}
\DeclareMathOperator{\NRA}{NRA}
\DeclareMathOperator{\CNA}{CNA}
\title[On the Crawford number attaining operators]
{On the Crawford number attaining operators}
\author[G.~Choi]{Geunsu Choi}
\address[G.~Choi]{Department of Mathematics Education, Dongguk University, Seoul 04620, Republic of Korea}
\email{\texttt{chlrmstn90@gmail.com}}
\author[H.~J.~Lee]{Han Ju Lee}
\address[H.~J.~Lee]{Department of Mathematics Education, Dongguk University, Seoul 04620, Republic of Korea}
\email{\texttt{hanjulee@dgu.ac.kr}}
\thanks{The first and second authors were supported by Basic Science Research Program through the National Research Foundation of
Korea(NRF) funded by the Ministry of Education, Science and Technology [NRF-2020R1A2C1A01010377].}
\keywords{Banach space, Norm attainment, Numerical radius, Crawford number}
\subjclass[2010]{Primary: 46B20;  Secondary: 46B04, 46B25, 47A12}
\date{\today}                                           
\begin{document}

\begin{abstract}
We study the denseness of Crawford number attaining operators on Banach spaces. Mainly, we prove that if a Banach space has the RNP, then the set of Crawford number attaining operators is dense in the space of bounded linear operators. We also see among others that the set of Crawford number attaining operators may be dense in the space of all bounded linear operators while they do not coincide, by observing the case of compact operators when the Banach space has a 1-unconditional basis. Furthermore, we show a Bishop-Phelps-Bollob\'as type property for the Crawford number for certain Banach spaces, and we finally discuss some difficulties and possible problems on the topic.
\end{abstract}

\maketitle

\section{Introduction}
The study of numerical radius was one of main interest in the field of functional analysis in recent decades. In particular, phenomenon of numerical radius attaining operators was discovered in many sources such as \cite{A1,A2,AAP,AP,CMM,P}. It has not been that long since the minimum norm of an operator became an issue as a separated topic. An analogue of numerical radius in terms of minimum norm, which is known as the \emph{Crawford number}, plays an important role in view of eigenvalue optimizations of matrices. This concept was first introduced in \cite{C3} and the terminology originated from \cite{KLV}. We refer to \cite{BR,C1,C2,SPM} for recent works on the field of minimum norm attaining operators, and see also \cite{SMBP} which concerns the Crawford number attainment on a Hilbert space. In \cite{C2}, the main perspective is to find conditions on pairs of Banach spaces such that the set of minimum norm attaining operators is dense in the space of all bounded linear operators. Our main goal is to study the minimum analogue of numerical radius in this sense.

As our main conventions require some universal notations, we illustrate them as follows. Let $X$ and $Y$ always denote a Banach space on a scalar field $\mathbb{K}=\R$ or $\C$ unless specifically mentioned. The unit sphere and unit ball of $X$ are denoted by $S_X$ and $B_X$, respectively. We use the notion $X^*$ for the dual space of $X$, and we write by $\Pi(X)$ the \emph{state} of $X$ given by $\Pi(X) = \{(x,x^*) \in S_X \times S_{X^*}: x^*(x)=1\}$. The space of all bounded linear operators from $X$ into $Y$ is denoted by $\Lin(X,Y)$, and when the range space coincides with the domain space, we abbreviate by $\Lin(X)$ for convenience.

A bounded linear operator $T \in \Lin(X,Y)$ is said to \emph{attain its norm} at $x_0 \in S_X$ if $\|T\|=\|Tx_0\|$, and we write by $T \in \NA(X,Y)$. We define the \emph{numerical radius} $\nu(T)$ of an operator $T \in \Lin(X)$ by
$$
\nu(T) := \sup\{|x^*(Tx)|: (x,x^*) \in \Pi(X) \}.
$$
In a similar sense, $T \in \Lin(X)$ is said to \emph{attain its numerical radius} at $(x_0,x_0^*) \in \Pi(X)$ if $\nu(T)=|x_0^*(Tx_0)|$, and we write by $T \in \NRA(X)$.

With respect to above notions, an analogous version in terms of minimum norm attaining operators are introduced recently in \cite{CN}. The \emph{minimum norm} $m(T)$ of an operator $T \in \Lin(X,Y)$ is defined by
$$
m(T) := \inf \{\|Tx\|: x \in S_X\},
$$
and the \emph{Crawford number} $c(T)$ of $T \in \Lin(X)$ \cite{C3} is defined by
$$
c(T) := \inf \{|x^*(Tx)|: (x,x^*) \in \Pi(X) \}.
$$
An operator $T \in \Lin(X,Y)$ is said to \emph{attain its minimum norm} at $x_0$ if $m(T)=\|Tx_0\|$, and we write by $T \in \MA(X,Y)$. In the same manner, one can define the Crawford number attainment of an operator as follows.

\begin{definition}
A bounded linear operator $T \in \Lin(X)$ is said to \emph{attain its Crawford number at $(x_0,x_0^*) \in \Pi(X)$} if $c(T) = |x_0^*(Tx_0)|$, and the set of all bounded operators on $X$ which attain their Crawford number is denoted by 
$\CNA(X)$. 
\end{definition}

As aforementioned, our aim is to study the behavior of the set $\CNA(X)$ as $X$ varies. For instance, one can easily deduce that every operator attains its Crawford number when $X$ is finite-dimensional (Proposition \ref{prop:finite-dimensional}). We will see many properties that $\CNA(X)$ have including the case of adjoint operators and the set restricted by compact operators. In Section \ref{section:CNA}, we look also for a sufficient condition of $X$ having that $\CNA(X)$ is dense in $\Lin(X)$, such as when $X$ has the RNP. In addition, we will show that the Bishop-Phelps theorem for bounded closed convex sets holds when $X$ is a real Banach space. At the last of the section, we introduce some Banach spaces satisfying a weaker version of the Bishop-Phelps-Bollob\'as property for the Crawford number, which is a stronger condition compared to the denseness of Crawford number attaining operators. Finally in Section \ref{section:question}, we list up some difficulties in dealing with considerable results concerning the Crawford number.

\section{On the Crawford number attaining operators}\label{section:CNA}

This section basically consists of three subsections: on the set of Crawford number operators, on the denseness of Crawford number operators and on the Bishop-Phelps-Bollob\'as property for the Crawford number. More precisely, we first see some set relations on $\CNA(X)$, and then investigate the denseness in two ways when the set $\CNA(X)$ is dense in the whole space or when it satisfies a stronger condition compared to the denseness.

\subsection{On the set of Crawford number attaining operators}

We will look for the case when $\CNA(X)$ coincides with $\Lin(X)$ or when it is not. The very first result covers the finite-dimensional case for $X$, which follows from an immediate fact.

\begin{prop}\label{prop:finite-dimensional}
Let $X$ be a finite-dimensional Banach space. Then, $\CNA(X) = \Lin(X)$.
\end{prop}

\begin{proof}
Since $\Pi(X)$ is a compact set in $S_X \times S_{X^*}$, the infimum can be chosen to be the minimum for any $T \in \Lin(X)$.
\end{proof}

Recall that an operator $T \in \Lin(X)$ is called a \emph{monomorphism} if $T$ is injective and $T(X)$ is closed, or equivalently, if there exists a constant $\lam>0$ such that $\|Tx\| \geq \lam\|x\|$ for all $x \in S_X$. Being aware of that an operator is whether injective or a monomorphism helps us to determine related consequence in view of the Crawford number.

\begin{prop} We have the followings.
\begin{enumerate}
\item[\textup{(a)}] If an operator $T \in \Lin(X)$ is not a monomorphism, then $c(T)=0$. Thus the Crawford number of a compact operator is zero for an infinite-dimensional Banach space $X$.
\item[\textup{(b)}] If $T \in \Lin(X)$ is not injective, then $T \in \CNA(X)$. However, the converse does not hold even if $c(T)=0$.
\end{enumerate}
\end{prop}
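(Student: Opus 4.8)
The plan is to treat the two parts separately, with part (a) resting on the elementary comparison $c(T) \le m(T)$ and part (b) on a direct construction together with a low-dimensional counterexample. For part (a), I would first record that the Crawford number never exceeds the minimum norm. Indeed, given any $x \in S_X$, the Hahn--Banach theorem furnishes a norming functional $x^* \in S_{X^*}$ with $(x,x^*) \in \Pi(X)$, and then $c(T) \le |x^*(Tx)| \le \|x^*\|\,\|Tx\| = \|Tx\|$; taking the infimum over $x \in S_X$ yields $c(T) \le m(T)$. Now if $T$ is not a monomorphism, then by the characterization quoted just before the statement there is no $\lam>0$ with $\|Tx\| \ge \lam$ for all $x \in S_X$, i.e. $m(T)=0$, whence $c(T)=0$.

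For the ``thus'' clause, I would argue that a compact operator on an infinite-dimensional $X$ can never be a monomorphism: were it one, then $T$ would be bounded below, so its inverse on the closed subspace $T(X)$ would be bounded, and $\Id_X = T^{-1}\circ T$ would be compact as a composition of a compact and a bounded operator, forcing $X$ to be finite-dimensional, a contradiction. Consequently part (a) applies and $c(T)=0$ for every compact $T$.

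For part (b), the forward implication is immediate: if $T$ is not injective, choose $x_0 \in S_X$ with $Tx_0=0$ and any norming functional $x_0^*$ for $x_0$; then $(x_0,x_0^*) \in \Pi(X)$ and $|x_0^*(Tx_0)|=0$, so that, $c(T)$ being a nonnegative infimum, we obtain $c(T)=0=|x_0^*(Tx_0)|$ and hence $T \in \CNA(X)$. To see that the converse fails even when $c(T)=0$, I would exhibit an injective operator attaining Crawford number $0$: on the real Euclidean plane $X=\R^2$, the rotation $T(s,t)=(-t,s)$ is invertible, in particular injective, yet, identifying each unit vector with its Riesz functional, $x^*(Tx)=\langle Tx,x\rangle = -ts+st = 0$ for every $x \in S_X$, so $c(T)=0$ is attained at every pair of $\Pi(X)$ while $T$ remains injective.

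None of the steps presents a genuine obstacle; the two points requiring care are the direction of the inequality in part (a) -- one must note that a \emph{single} norming functional already bounds $|x^*(Tx)|$ by $\|Tx\|$, so passing to the infimum over the possibly larger fibre of $\Pi(X)$ above each $x$ only decreases $c(T)$, which is exactly what is wanted -- and the verification that the chosen counterexample is simultaneously injective and of Crawford number $0$, which the rotation handles cleanly.
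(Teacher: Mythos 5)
Your proof is correct and follows essentially the same route as the paper's: part (a) via $c(T)\le m(T)=0$ using norming functionals, and part (b) via the immediate forward implication plus an injective two-dimensional counterexample, where your $90^\circ$ rotation on Euclidean $\R^2$ plays exactly the same role as the paper's coordinate swap $T(u,v)=(v,u)$ on $\mathbb{K}^2$. The only addition is that you spell out why a compact operator on an infinite-dimensional space cannot be a monomorphism, a step the paper leaves implicit.
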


\begin{proof}
(a) Since $T$ is not a monomorphism, there is a sequence $\{x_n\} \subseteq S_X$ such that $\lim_n\|Tx_n\|=0$, which implies that $m(T)=0$. Thus for every $\eps>0$ there exists $x_n$ such that $\|Tx_n\|<\eps$ and choose $x_n^*\in S_{X^*}$ such that $(x_n,x_n^*) \in \Pi(X)$. Then $|x_n^*(Tx_n)|\leq \|Tx_n\|<\eps$. (b) One way is clear. For the converse, just consider $T \in \Lin(\mathbb{K}^2)$ given by $T((u,v)):=(v,u)$ for $u,v \in \mathbb{K}$. Then we have $x_0^*(Tx_0)=0$ for $(x_0,x_0^*)=((1,0),(1,0)) \in \Pi(\mathbb{K}^2)$ while $T$ is injective.
\end{proof}

We cannot conclude as well on the contrary that $T \in \Lin(X)$ attains its Crawford number when $T$ is injective and $c(T)=0$. To see this, we refer to the following result. Here we will denote by $\CNA_\K(X)$ the set of all compact operators which attain their Crawford number. Recall that an unconditional basis $\{x_n\}$ of $X$ is called \emph{1-unconditional} \cite{LT} if $\sup_{\theta  \in \{-1, 1\}^\mathbb{N}}\|S_\theta\|=1$, where $S_\theta \in \Lin(X)$ is defined by $S_\theta\bigl(\sum_{n=1}^\infty a_nx_n \bigr) := \sum_{n=1}^\infty a_n\theta_nx_n$ for each $\theta=\{\theta_n\}_{n=1}^\infty \in \{-1, 1\}^\mathbb{N}$

\begin{prop}\label{prop:compact-CNA}
Let $X$ be a Banach space with a 1-unconditional basis. Then, $\CNA_\K(X) \neq \K(X)$. In particular, there is an injective operator $T \in \Lin(X)$ with $c(T)=0$ which does not attain its Crawford number.
\end{prop}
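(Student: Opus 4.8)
The plan is to produce one operator that simultaneously witnesses both assertions: a compact, injective $T$ whose Crawford number $c(T)=0$ (forced by part (a) of the previous proposition, since a space with a Schauder basis is infinite-dimensional) is never attained. Writing $\{x_n\}$ for the given $1$-unconditional basis and $\{x_n^*\}$ for the associated coordinate functionals, I would take the diagonal multiplier
$$
T\Bigl(\sum_n a_n x_n\Bigr) := \sum_n \lambda_n a_n x_n,
$$
where $(\lambda_n)$ is any sequence of strictly positive reals tending to $0$, say $\lambda_n = 1/n$. The positivity of every $\lambda_n$ will give injectivity and non-attainment, while $\lambda_n \to 0$ will give compactness.

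Two of the three required properties are routine. Injectivity is immediate: $Tx=0$ forces $\lambda_n a_n = 0$, hence $a_n=0$, for all $n$. For compactness I would approximate $T$ by its truncations $T_N\bigl(\sum_n a_n x_n\bigr) := \sum_{n\le N}\lambda_n a_n x_n$, which are finite-rank. The point is that $1$-unconditionality makes the norm of a real diagonal multiplier equal to the supremum of its entries: the restriction of such a multiplier (bounded by $1$) to any finite set of coordinates is a convex combination of the sign operators $S_\theta$, each of norm $1$, so $\|T-T_N\| = \sup_{n>N}\lambda_n \to 0$ and $T$ is a norm-limit of finite-rank operators.

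The heart of the argument, and the step I expect to be the main obstacle, is showing $x^*(Tx)\neq 0$ for every state $(x,x^*)\in\Pi(X)$, so that the value $c(T)=0$ is genuinely never realized. Fix $(x,x^*)\in\Pi(X)$, write $x=\sum_n a_n x_n$, and set $c_n := x^*(x_n)$ and $d_n := c_n a_n$; continuity of $x^*$ gives $\sum_n d_n = x^*(x) = 1$. For any $\theta\in\{-1,1\}^{\mathbb{N}}$ one has $\bigl|\sum_n \theta_n d_n\bigr| = |x^*(S_\theta x)| \le \|S_\theta\| = 1$; taking real parts and choosing $\theta_n = \sgn(\re d_n)$ yields $\sum_n |\re d_n| \le 1$. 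Since $\sum_n \re d_n = \re\bigl(\sum_n d_n\bigr) = 1$, this forces $\re d_n \ge 0$ for all $n$ together with $\sum_n \re d_n = 1$. Because the $\lambda_n$ are positive reals,
$$
\re\bigl(x^*(Tx)\bigr) = \sum_n \lambda_n\,\re d_n \ \ge\ \lambda_{n_0}\,\re d_{n_0}\ >\ 0
$$
for any index $n_0$ with $\re d_{n_0}>0$ (one exists since the $\re d_n$ sum to $1$); in particular $x^*(Tx)\neq 0$. Hence $|x^*(Tx)|>0=c(T)$ for every state, so $T\notin\CNA(X)$, which proves $\CNA_\K(X)\neq\K(X)$ and, as $T$ is injective, the final clause.

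One subtlety worth flagging is that the paper's definition of $1$-unconditionality uses only the real signs $\{-1,1\}^{\mathbb{N}}$, so in the complex case the sign-averaging argument controls only the real parts $\re d_n$. This is precisely what is needed here, since the entries $\lambda_n$ were chosen real and positive, and so positivity of $\re\bigl(x^*(Tx)\bigr)$ already yields $x^*(Tx)\neq 0$.
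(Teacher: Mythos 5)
Your proposal is correct and follows essentially the same route as the paper: the same diagonal operator $Tx_n = x_n/n$, compactness via finite-rank truncations using $1$-unconditionality, and non-attainment by showing that $\re\bigl(x^*(x_n)a_n\bigr) \ge 0$ for every state, forcing $\re x^*(Tx) > 0$. The only cosmetic differences are that the paper derives the positivity of the coordinates by flipping a single sign and reaching a contradiction (where you flip all signs at once via $\theta_n = \sgn(\re d_n)$, which needs the trivial convention $\theta_n = 1$ when $\re d_n = 0$), and it bounds the truncation error by $2/(k+1)$ via \cite[Proposition 1.c.7]{LT} rather than your sharper constant for real diagonal entries.
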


\begin{proof}
Let $\{x_n\} \subseteq X$ be a 1-unconditional basis of $X$. Let $\{x_n^*\} \subseteq X^*$ be a coordinate functionals such that $x_m^*(x_n)=\delta_{n,m}$ where $\delta_{n,m}$ is the Kronecker delta. We shall write $x \in X$ and $x^* \in X^*$ by the form $x = \sum_{n=1}^\infty a_nx_n \in X$ and $x^* = \text{weak}^*\text{-} \lim_N\sum_{n=1}^N a_n^*x_n^* \in X^*$. Define an operator $T: X \to X$ linearly by $Tx_n = \dfrac{x_n}{n}$ for each $n \in \N$. It is clear that $T$ is injective. If we let $T_k:X\to X$ be the finite rank operator defined for each $k\in \mathbb{N}$ by  
$$
T_k(x) = \sum_{n=1}^k x_n^*(Tx) x_n \qquad \text{for } x\in X,
$$
then we have
\begin{align*}
\|T-T_k\| &= \sup_{\sum a_nx_n \in B_X} \left\| \sum_{n=k+1}^\infty \frac{a_nx_n}{n} \right\| \\
&\leq \frac{2}{k+1} \sup_{\sum a_nx_n \in B_X} \left\|\sum_{n=k+1}^\infty a_nx_n\right\| \\
&\leq \frac{2}{k+1}
\end{align*}
by \cite[Proposition 1.c.7]{LT}. So $T_k$ converges to $T$, and this shows that $T \in \K(X)$. However, there is no $(x_0,x_0^*) \in \Pi(X)$ such that $x_0^*(Tx_0)=c(T)=0$. Indeed, if we let $x_0=\sum_{n=1}^\infty a_nx_n$ and $x_0^*=\text{weak}^*\text{-} \lim_N \sum_{n=1}^N a_n^*x_n^*$, then we have $\sum_{n=1}^\infty a_n^*a_n=1$. It follows by the 1-unconditionality of $\{x_n\}$ that $\re a_n^*a_n\geq 0$ for all $n\in \N$. To see this, assume there is $n_0 \in \N$ such that $\re a_{n_0}^*a_{n_0}< 0$. Since $\{x_n\}$ is 1-unconditional, we have
$$
z_0:=\left(\sum_{n\neq n_0} a_nx_n\right) - a_{n_0}x_{n_0} \in B_X.
$$
However, we obtain from assumption that $\re x_0^*(z_0)>1$, which is a contradiction to the choice of $(z_0,x_0^*) \in B_X \times S_{X^*}$. Thus this implies that
$$
0 < \re \sum_{n=1}^\infty \frac{a_n^*a_n}{n} = \re x_0^*(Tx_0) \leq |x_0^*(Tx_0)|,
$$
finishing the proof.
\end{proof}

We now would like to see a characterization of Crawford number attaining adjoint operators. This is a continuation of works done in \cite{AP}, in the sight of the Crawford number. We begin with a fundamental result of the Crawford number for adjoint operators.

\begin{lemma}\label{lemma:adjoint}
Let $X$ be a Banach space. For $T \in \Lin(X)$, we have $c(T)=c(T^*)$.
\end{lemma}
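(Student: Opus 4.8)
The plan is to prove the two inequalities $c(T^*)\le c(T)$ and $c(T)\le c(T^*)$ separately by comparing the two ``numerical range'' sets
$$
W(T):=\{x^*(Tx):(x,x^*)\in\Pi(X)\}\qquad\text{and}\qquad W(T^*):=\{x^{**}(T^*x^*):(x^*,x^{**})\in\Pi(X^*)\},
$$
observing throughout that $c(T)=\inf_{\mu\in W(T)}|\mu|$ and $c(T^*)=\inf_{\mu\in W(T^*)}|\mu|$, so that all comparisons reduce to inclusions between these sets followed by passing to infima of moduli.

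First I would establish the inclusion $W(T)\subseteq W(T^*)$, which is the easy direction and uses only the canonical embedding. Given $(x,x^*)\in\Pi(X)$, let $\widehat{x}\in S_{X^{**}}$ be its canonical image; since $\widehat{x}(x^*)=x^*(x)=1$ we have $(x^*,\widehat{x})\in\Pi(X^*)$, and the identity $\widehat{x}(T^*x^*)=(T^*x^*)(x)=x^*(Tx)$ shows that $x^*(Tx)\in W(T^*)$. Taking infima of moduli over a smaller set gives the larger value, so this inclusion yields $c(T)\ge c(T^*)$.

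The substance of the lemma is the reverse inequality, for which I would prove $W(T^*)\subseteq\overline{W(T)}$; since $|\cdot|$ is continuous, $\inf_{\overline{W(T)}}|\cdot|=\inf_{W(T)}|\cdot|=c(T)$, and the inclusion then forces $c(T^*)\ge c(T)$, completing the proof. To prove the inclusion, fix $(x^*,x^{**})\in\Pi(X^*)$ and set $\lambda=x^{**}(T^*x^*)$. By Goldstine's theorem there is a net $(x_\alpha)\subseteq B_X$ with $\widehat{x_\alpha}\to x^{**}$ in the weak$^*$ topology of $X^{**}$; evaluating at $x^*$ and at $T^*x^*$ gives $x^*(x_\alpha)\to1$ and $x^*(Tx_\alpha)=(T^*x^*)(x_\alpha)\to\lambda$. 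The pairs $(x_\alpha,x^*)$ need not lie in $\Pi(X)$, and the remedy is the Bishop--Phelps--Bollob\'as theorem for functionals, which holds on every Banach space: for each $\eps>0$ there is $\delta>0$ so that once $|1-x^*(x_\alpha)|<\delta$ one finds $(y_\alpha,y_\alpha^*)\in\Pi(X)$ with $\|y_\alpha-x_\alpha\|<\eps$ and $\|y_\alpha^*-x^*\|<\eps$. The elementary estimate $|y_\alpha^*(Ty_\alpha)-x^*(Tx_\alpha)|\le 2\eps\|T\|$ then places $y_\alpha^*(Ty_\alpha)\in W(T)$ within $2\eps\|T\|+|x^*(Tx_\alpha)-\lambda|$ of $\lambda$, and letting $\eps\to0$ (with $\alpha$ chosen accordingly) shows $\lambda\in\overline{W(T)}$.

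The main obstacle is exactly this reverse passage: the vectors $x_\alpha$ furnished by Goldstine satisfy the state condition $x^*(x_\alpha)=1$ only in the limit, so they do not directly produce elements of $\Pi(X)$. The Bishop--Phelps--Bollob\'as theorem is the tool that repairs this defect, simultaneously correcting the vector and the functional into an honest state while keeping the perturbation of the value $x^*(T\,\cdot\,)$ under control. Everything else is a direct computation with the canonical embedding and weak$^*$ limits.
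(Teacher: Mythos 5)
Your proposal is correct and follows essentially the same route as the paper: the easy inequality $c(T^*)\le c(T)$ via the canonical embedding, and the reverse inequality via Goldstine's theorem combined with the Bishop--Phelps--Bollob\'as theorem to convert approximate states of $X$ into genuine elements of $\Pi(X)$ while controlling the change in $x^*(Tx)$. The only difference is cosmetic: you package the argument as the set inclusion $W(T^*)\subseteq\overline{W(T)}$ applied to every state, whereas the paper runs the identical perturbation argument directly on a state that nearly attains $c(T^*)$.
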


\begin{proof}
$c(T^*) \leq c(T)$ is clear. For the converse, let $\gamma>0$ be given and assume that $\|T\|=1$. By definition of $c(T^*)$, there exists $(x_0^*,x_0^{**}) \in \Pi(X^*)$ such that $|x_0^{**}(T^*x_0^*)|<c(T^*) + \dfrac{\gamma}{4}$. We may find a net $\{x_\alpha\}_{\alpha \in I} \subseteq S_X$ which converges to $x_0^{**}$ in the weak$^*$ topology by the Goldstine theorem. Let us fix $\alpha_0 \in I$ such that
$$
|J(x_{\alpha_0})(T^*x_0^*) - x_0^{**}(T^*x_0^*)| < \frac{\gamma}{4} \quad \text{and} \quad \re \left[ J(x_{\alpha_0})(x_0^*) \right]>1-\frac{1}{4} \left(\frac{\gamma}{4}\right)^2,
$$
where $J: X \to X^{**}$ is the canonical embedding. Since $\re x_0^*(x_{\alpha_0}) > 1-\dfrac{1}{4} \left(\dfrac{\gamma}{4}\right)^2$, by the Bishop-Phelps-Bollob\'as theorem \cite{B1}, there exists $(y_0,y_0^*) \in \Pi(X)$ such that
$$
\|y_0-x_{\alpha_0}\| < \frac{\gamma}{4} \quad \text{and} \quad \|y_0^*-x_0^*\|<\frac{\gamma}{4}.
$$
Thus we can deduce that
\begin{align*}
|y_0^*(Ty_0) - x_0^*(Tx_{\alpha_0})| &\leq |y_0^*(Ty_0) - y_0^*(Tx_{\alpha_0})| + |y_0^*(Tx_{\alpha_0}) - x_0^*(Tx_{\alpha_0})| \\
&\leq \|y_0-x_{\alpha_0}\| + \|y_0^*-x_0^*\| < \frac{\gamma}{2}.
\end{align*}
Finally, it follows that
\begin{align*}
|y_0^*(Ty_0)| &\leq |y_0^*(Ty_0) - x_0^*(Tx_{\alpha_0})| + |J(x_{\alpha_0})(T^*x_0^*) - x_0^{**}(T^*x_0^*)| + |x_0^{**}(T^*x_0^*)|\\
&< \frac{\gamma}{2} + \frac{\gamma}{4} + \left(c(T^*) + \frac{\gamma}{4}\right) = c(T^*) + \gamma.
\end{align*}
Since $\gamma>0$ was arbitrary, we have shown that $c(T) \leq c(T^*)$.
\end{proof}

In \cite{AP}, they showed a characterization result of numerical radius attaining adjoint operators in order to show that the set of operators whose adjoint attain their numerical radius is dense in $\Lin(X)$ for every Banach space $X$. We would like to follow their idea of the proof in case of the Crawford number. Our main curiosity, concerning the denseness of operators whose adjoint attain their Crawford number, is stated in Question \ref{question:adjoint-dense}.

\begin{prop}\label{prop:CNA-equivalent}
Let $X$ be a Banach space. For $T \in \Lin(X)$, the following are equivalent.
\begin{enumerate}
\item[\textup{(a)}] $T^* \in \CNA(X^*)$.
\item[\textup{(b)}] There are sequences $\{x_n\} \subseteq S_X$, $\{x_n^*\} \subseteq S_{X^*}$ and $\{\delta_n\}, \{\eps_n\} \subseteq \R^+$ satisfying
\begin{enumerate}
\item[\textup{(i)}] $\lim_n \delta_n = \lim_n (\eps_n/\delta_n) = 0$,
\item[\textup{(ii)}] $1 + \delta_n |x_{n+k}^*(Tx_n)| \leq |x_{n+k}^*(x_n)| + \delta_n c(T) + \eps_n$.
\end{enumerate}
\end{enumerate}
\end{prop}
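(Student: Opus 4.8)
The plan is to adapt the Acosta--Pay\'a argument for numerical radius attaining adjoints to the present setting, using Lemma~\ref{lemma:adjoint} (so that $c(T)=c(T^*)$) and reading condition~(ii) as being required for every $n\in\N$ and every $k\geq 0$.

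For the implication (a)$\Rightarrow$(b) I would argue as follows. Assume $T^*$ attains its Crawford number at some $(x_0^*,x_0^{**})\in\Pi(X^*)$, so that $|x_0^{**}(T^*x_0^*)|=c(T^*)=c(T)$. Since $\|x_0^{**}\|=1$, Goldstine's theorem produces a net in $B_X$ converging weak$^*$ to $x_0^{**}$, and, the norm being weak$^*$-lower semicontinuous, this net can be normalized so as to lie in $S_X$. Evaluating only against the two fixed functionals $x_0^*$ and $T^*x_0^*$, I would extract a sequence $\{x_n\}\subseteq S_X$ with $x_0^*(x_n)\to x_0^{**}(x_0^*)=1$ and $x_0^*(Tx_n)=(T^*x_0^*)(x_n)\to x_0^{**}(T^*x_0^*)$. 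Now set $x_n^*:=x_0^*$ for every $n$, so the staggered index is immaterial; writing $a_n:=1-|x_0^*(x_n)|\to 0$ and $b_n:=|x_0^*(Tx_n)|-c(T)\to 0$, I would pick $\delta_n\to 0$ decaying slowly enough that $a_n/\delta_n\to 0$ (e.g. $\delta_n:=\max\{\sqrt{a_n},1/n\}$) and set $\eps_n:=\max\{1/n^2,\,a_n+\delta_n b_n\}$. Then (i) holds and (ii) is just the inequality $a_n+\delta_n b_n\leq\eps_n$.

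For the converse (b)$\Rightarrow$(a), fix a free ultrafilter $\mathcal U$ on $\N$ and let $x_0^{**}\in B_{X^{**}}$ be the weak$^*$ limit of $(x_n)$ along $\mathcal U$ and $x_0^*\in B_{X^*}$ the weak$^*$ limit of $(x_n^*)$ along $\mathcal U$. Rearranging (ii) gives, for all $n$ and $k$, both $1-|x_{n+k}^*(x_n)|\leq\delta_n c(T)+\eps_n$ and $|x_{n+k}^*(Tx_n)|\leq c(T)+\eps_n/\delta_n$. The reason the whole tail of functionals is carried is that both estimates survive the inner limit: for fixed $n$, letting $m=n+k\to\infty$ along $\mathcal U$ and using $x_m^*\to x_0^*$ weak$^*$ against the fixed vectors $x_n,Tx_n$ yields $|x_0^*(x_n)|\geq 1-\delta_n c(T)-\eps_n$ and $|x_0^*(Tx_n)|\leq c(T)+\eps_n/\delta_n$. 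Letting then $n\to\infty$ along $\mathcal U$, and using $x_n\to x_0^{**}$ weak$^*$ against the fixed functionals $x_0^*$ and $T^*x_0^*$, the iterated limits give $|x_0^{**}(x_0^*)|=1$ and $|x_0^{**}(T^*x_0^*)|\leq c(T)$. Choosing a unimodular $\mu$ with $x_0^{**}(\mu x_0^*)=1$, one checks that the norms are forced to be $1$, so $(\mu x_0^*,x_0^{**})\in\Pi(X^*)$; then $|x_0^{**}(T^*(\mu x_0^*))|=|x_0^{**}(T^*x_0^*)|\leq c(T)=c(T^*)$, while the reverse inequality is automatic because $c(T^*)$ is an infimum over $\Pi(X^*)$. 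Hence $T^*$ attains its Crawford number.

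The delicate point, and the very reason the functionals appear in the staggered form $x_{n+k}^*$ rather than paired as $(x_n,x_n^*)$, is the non-commutativity of the two weak$^*$ limits: a weak$^*$ cluster point $x_0^{**}$ of $\{x_n\}$ is not weak$^*$-continuous on $X^*$, so one cannot directly identify $\lim_n x_n^*(x_n)$ with $x_0^{**}(x_0^*)$. Running the tail limit $k\to\infty$ first against the fixed $x_n$, and only afterwards the limit in $n$, turns the two-sided estimate into a genuine iterated limit in which each stage is merely the evaluation of a weak$^*$ limit against a fixed vector or functional. I expect the bookkeeping for the moduli and the unimodular phase $\mu$, together with the justification that these iterated limits along $\mathcal U$ are legitimate (equivalently, a careful diagonal subnet argument), to be the main technical hurdle; by comparison the quantitative choices of $\delta_n,\eps_n$ in the forward direction are routine.
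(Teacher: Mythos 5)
Your proof is correct and follows essentially the same route as the paper's: Goldstine's theorem with the constant choice $x_n^*:=x_0^*$ for (a)$\Rightarrow$(b), and weak$^*$ compactness with an iterated limit (first in $k$, then in $n$) plus a unimodular rotation and Lemma~\ref{lemma:adjoint} for (b)$\Rightarrow$(a). The only cosmetic differences are that you take limits along a fixed free ultrafilter where the paper uses weak$^*$ cluster points, and you choose $\delta_n,\eps_n$ after extracting $\{x_n\}$ whereas the paper fixes sequences satisfying (i) in advance and fits $\{x_n\}$ to them.
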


\begin{proof}
(a)$\Rightarrow$(b). For $T^* \in \CNA(X^*)$, let $(x_0^*,x_0^{**}) \in \Pi(X^*)$ be such that $|x_0^{**}(T^*x_0^*)|=c(T^*)=c(T)$ by Lemma \ref{lemma:adjoint}. Let $\{\delta_n\}, \{\eps_n\}$ be sequences in $\R^+$ satisfying (i). Take $x_n^* := x_0^*$, then by the Goldstine theorem, we may find a sequence $\{x_n\} \subseteq S_X$ such that
$$
|x_0^*(Tx_n) - x_0^{**}(T^*x_0^*)| \leq \frac{\eps_n}{2\delta_n} \qquad \text{and} \qquad |x_0^*(x_n) - x_0^{**}(x_0^*)| \leq \frac{\eps_n}{2}.
$$
Then we have
$$
|x_0^*(x_n)| \geq 1 - \frac{\eps_n}{2} \qquad \text{and} \qquad |x_0^*(Tx_n)| - \frac{\eps_n}{2\delta_n} \leq c(T).
$$
Hence
$$
1 + \delta_n |x_0^*(Tx_n)| \leq |x_0^*(x_n)| + \delta_n c(T) + \eps_n,
$$
obtaining (ii).

(b)$\Rightarrow$(a). As $\{x_n\} \subseteq S_X$ and $\{x_n^*\} \subseteq S_{X^*}$, there are cluster points $x_0^{**} \in B_{X^{**}}$ and $x_0^* \in B_{X^*}$ of their weak$^*$ topologies, respectively. By (ii), we have
$$
1 \leq |x_{n+k}^*(x_n)| + \delta_n c(T) + \eps_n.
$$
Letting $k$ tend to infinity, we can deduce from (i) that
$$
1 \leq |x_0^{**}(x_0^*)| + \delta_n c(T) + \eps_n.
$$
It follows by letting $n$ tend to infinity that $|x_0^{**}(x_0^*)| \geq 1$. Now, again by (ii), we obtain
$$
|x_{n+k}^*(Tx_n)| \leq c(T) + \frac{\eps_n}{\delta_n}
$$
for all $n$ and $k$. Thus we have $|x_0^{**}(T^*x_0^*)| \leq c(T)$, which shows that $T^*$ attains its Crawford number.
\end{proof}

\subsection{Denseness of Crawford number attaining operators}

We are now interested in how many operators attain their Crawford number in $\Lin(X)$, which leads naturally to the denseness problem of the set $\CNA(X)$. First, we introduce a very basic, but crucial fundamental result which will be frequently used throughout the article.

\begin{prop}\label{prop:radius-0-approx}
Let $X$ be a Banach space. Then, every operator $T \in \Lin(X)$ with $c(T)=0$ can be approximated by operators which attain their Crawford number.
\end{prop}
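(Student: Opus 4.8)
The plan is to approximate $T$ by a rank-one perturbation designed to force the Crawford number to equal zero \emph{and} to be attained at one prescribed state. Fix $\eps>0$. Since $c(T)=0$, the infimum defining $c(T)$ lets me select a state $(x_0,x_0^*)\in\Pi(X)$ with $|x_0^*(Tx_0)|<\eps$. This $(x_0,x_0^*)$ is the pair at which the perturbed operator will attain its Crawford number, so the perturbation should leave this pair untouched while sending the value $x_0^*(\,\cdot\;x_0)$ to $0$.

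Next I would introduce the rank-one operator $R\in\Lin(X)$ given by $Rx:=x_0^*(x)\,x_0$. Because $\|x_0\|=\|x_0^*\|=1$ one has $\|R\|=1$, and because $x_0^*(x_0)=1$ one computes $Rx_0=x_0$, hence $x_0^*(Rx_0)=x_0^*(x_0)=1$. Setting $\lambda:=x_0^*(Tx_0)$ and defining $S:=T-\lambda R$, the operator $S$ lies within $\eps$ of $T$, since $\|S-T\|=|\lambda|\,\|R\|=|\lambda|<\eps$.

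It then remains to check that $S$ attains its Crawford number. Evaluating at the chosen state gives $x_0^*(Sx_0)=x_0^*(Tx_0)-\lambda\,x_0^*(Rx_0)=\lambda-\lambda=0$. Since $c(S)=\inf\{|x^*(Sx)|:(x,x^*)\in\Pi(X)\}$ is always a nonnegative infimum and the value $0$ is realized at $(x_0,x_0^*)\in\Pi(X)$, we conclude $c(S)=0$ and that this infimum is attained, i.e.\ $S\in\CNA(X)$. Letting $\eps\to 0$ then produces a sequence of Crawford number attaining operators converging to $T$.

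I do not expect a serious obstacle here. The only point requiring care is that the perturbation be built from exactly the pair $(x_0,x_0^*)$ — using $R=x_0^*(\,\cdot\,)\,x_0$ rather than any other rank-one map — so that it cancels the value of $T$ at that single state; one does not need to control the behaviour at other states, because attainment of a \emph{minimum} equal to $0$ is automatic once the nonnegative quantity $|x^*(Sx)|$ vanishes somewhere on $\Pi(X)$.
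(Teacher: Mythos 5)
Your proposal is correct and takes essentially the same approach as the paper: choose a state $(x_0,x_0^*)\in\Pi(X)$ with $|x_0^*(Tx_0)|<\eps$, subtract a small perturbation that cancels the value of $T$ at that state, and observe that the nonnegative infimum defining $c(S)$ is attained once the value $0$ is realized at $(x_0,x_0^*)$. The only cosmetic difference is the perturbation: the paper uses the scalar multiple of the identity $x_0^*(Tx_0)\,\Id$, whereas you use the rank-one operator $x_0^*(Tx_0)\,x_0^*(\cdot)\,x_0$; both have norm $|x_0^*(Tx_0)|<\eps$ and achieve the same cancellation.
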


\begin{proof}
Let $\eps>0$ and an operator $T \in \Lin(X)$ with $c(T)=0$ be given. Then, there exists $(x_0,x_0^*) \in \Pi(X)$ such that $|x_0^*(Tx_0)|<\eps$. Define an operator $S \in \Lin(X)$ by $Sx := Tx - x_0^*(Tx_0) x$. It is clear that
$$
\|S-T\| = \sup_{x \in B_X} \|x_0^*(Tx_0) x\| <\eps
$$
and
$$
x_0^*(Sx_0)=x_0^*(Tx_0)-x_0^*(Tx_0) x_0^*(x_0)=0,
$$
which shows that $S \in \CNA(X)$.
\end{proof}

Recall from Proposition \ref{prop:compact-CNA} that $\CNA_\K(X) \neq \K(X)$ for every infinite-dimensional $X$ with a 1-unconditional basis. On the contrary, the next result shows that in the case of compact operators, $\CNA_\K(X)$ is always dense in $\K(X)$.

\begin{prop}
Let $X$ be a Banach space. Then, $\CNA_\K(X)$ is dense in $\K(X)$.
\end{prop}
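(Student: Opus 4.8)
The plan is to split into the finite- and infinite-dimensional cases. If $X$ is finite-dimensional, then Proposition \ref{prop:finite-dimensional} already gives $\CNA(X) = \Lin(X)$, and since every operator on $X$ is compact this forces $\CNA_\K(X) = \K(X)$, so there is nothing to prove. Hence I would assume from here on that $X$ is infinite-dimensional, in which case every compact operator $T$ satisfies $c(T) = 0$ (a compact operator is never a monomorphism on an infinite-dimensional space, as recorded in the preceding proposition).

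The tempting move is to invoke Proposition \ref{prop:radius-0-approx} directly, but its perturbation $S = T - x_0^*(Tx_0)\,\Id$ is a scalar shift of $T$ and hence fails to be compact on an infinite-dimensional space unless the scalar vanishes. So the crux is to replace that scalar perturbation by a compact one that still annihilates the value at the chosen state. Given $\eps > 0$, I would first use $c(T) = 0$ to select a state $(x_0,x_0^*) \in \Pi(X)$ with $|x_0^*(Tx_0)| < \eps$, and set $\lambda := x_0^*(Tx_0)$.

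Then I would define the rank-one perturbation $Sx := Tx - \lambda\, x_0^*(x)\, x_0$. Since the subtracted term is a rank-one (hence compact) operator and $T$ is compact, $S \in \K(X)$. The perturbation has norm $\|S - T\| = |\lambda| \sup_{x \in B_X} |x_0^*(x)| = |\lambda| < \eps$, so $S$ is close to $T$. Using $x_0^*(x_0) = 1$ one computes $x_0^*(Sx_0) = \lambda - \lambda\, x_0^*(x_0) = 0$. Because $S$ is compact and $X$ is infinite-dimensional, $c(S) = 0$, and the state $(x_0, x_0^*)$ witnesses $|x_0^*(Sx_0)| = 0 = c(S)$. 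Hence $S \in \CNA_\K(X)$, which yields the desired approximation.

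I do not expect a genuine obstacle here; the only subtlety is recognizing that the scalar-shift trick of Proposition \ref{prop:radius-0-approx} must be upgraded to a rank-one shift to remain inside $\K(X)$, after which the verification is a one-line computation.
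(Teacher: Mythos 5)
Your proof is correct, and its skeleton matches the paper's: reduce to the infinite-dimensional case, observe that compactness forces the relevant quantity to vanish, and then make a rank-one (hence compactness-preserving) perturbation that annihilates the value at a chosen state. The details differ in an interesting way, though. The paper starts from the fact that $m(T)=0$ for compact $T$ on an infinite-dimensional space (citing \cite[Corollary 2.4]{C2}), picks $x_0 \in S_X$ with $\|Tx_0\|<\eps$ together with any supporting functional $x_0^*$, and perturbs by $x \mapsto x_0^*(x)\,Tx_0$; the resulting operator $S$ satisfies $Sx_0=0$, so it is non-injective and attainment also follows from the earlier proposition on non-injective operators. You instead start from the weaker-looking fact $c(T)=0$, pick a state $(x_0,x_0^*)$ with $|x_0^*(Tx_0)|<\eps$, and perturb by $x \mapsto x_0^*(Tx_0)\,x_0^*(x)\,x_0$, which is precisely the compact upgrade of the scalar-shift trick in Proposition \ref{prop:radius-0-approx} — your $S$ need not be non-injective, but you verify attainment directly via $x_0^*(Sx_0)=0=c(S)$. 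Both routes are equally elementary and equally short; yours stays entirely within the Crawford-number framework (no appeal to the minimum-norm literature), while the paper's has the minor bonus that attainment is witnessed by non-injectivity. You also correctly diagnosed why Proposition \ref{prop:radius-0-approx} cannot be invoked verbatim: its perturbation is a multiple of the identity, which destroys compactness in infinite dimensions.
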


\begin{proof}
Let $\eps>0$ and an operator $T \in \K(X)$ be given. Assume $X$ is infinite-dimensional, and since we know that $m(T)=0$ due to the argument given in \cite[Corollary 2.4]{C2}, there exists $x_0 \in S_X$ such that $\|Tx_0\|<\eps$. Define an operator $S \in \Lin(X)$ by $Sx := Tx - x_0^*(x) Tx_0$ where $x_0^*$ is chosen so that $(x_0,x_0^*) \in \Pi(X)$. Then, it is clear that $\|S-T\|<\eps$ and $x_0^*(Sx_0)=0$, which shows that $S \in \CNA(X)$. It is easy to see that $S$ is compact since it is a rank-one perturbation of $T \in \K(X)$.
\end{proof}

If we go back to the minimum norm attaining operators, it is clear that minimum version of the Bishop-Phelps theorem holds for every Banach space $X$. This is a direct consequence of the failure of injectivity when $\dim (X) \geq 2$ according to \cite{C2}, and the case $\dim(X)=1$ is clear. Restricting $X$ to be a real Banach space, we are also able to obtain a minimum analogue of the Bishop-Phelps theorem for bounded closed convex set \cite{BP}. This may help giving some idea to Question \ref{question:beta} concerning the Crawford number.

\begin{prop}\label{prop:min-attaining}
Let $C$ be a nonempty bounded closed convex subset of a real Banach space $X$. Then, the set of functionals $x^* \in X^*$ such that $|x^*|$ attain their minimum on $C$ is dense in $X^*$.
\end{prop}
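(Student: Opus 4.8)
The plan is to reduce the statement to the classical Bishop--Phelps theorem \cite{BP}, which asserts that the set of functionals attaining their supremum on a nonempty bounded closed convex set is norm-dense in $X^*$. One must be slightly careful, however: a naive appeal to this theorem fails, because a functional attaining its supremum on $C$ need \emph{not} make $|x^*|$ attain its minimum (for instance if $x^*(c)>0$ for all $c\in C$ but $\inf_{c\in C}x^*(c)$ is not attained, the supremum is irrelevant to the minimum of $|x^*|$). The correct idea is that, after normalizing the sign, what we need to attain is the extreme value of $x^*$ that is \emph{nearest} to $0$, that is, an infimum; and infima are handled by applying Bishop--Phelps to $-x^*$. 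Here we use that $X$ is real in an essential way: $x^*(C)\subseteq\R$ is the image of a convex set under a linear map, hence an interval.

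First I would fix $x^*\in X^*$ and $\eps>0$ and set $M:=\sup_{c\in C}\|c\|<\infty$; if $M=0$ then $C=\{0\}$ and the claim is trivial, so assume $M>0$. Since $x^*(C)$ is an interval and $\inf_{c\in C}|x^*(c)|=\dist\bigl(0,\overline{x^*(C)}\bigr)$, there are two situations. If $0\in x^*(C)$, i.e.\ $x^*(c_0)=0$ for some $c_0\in C$, then $|x^*|$ already attains its minimum $0$ at $c_0$ and $x^*$ itself lies in the desired set. Otherwise $0\notin x^*(C)$, so this interval lies entirely in $(0,\infty)$ or in $(-\infty,0)$; replacing $x^*$ by $-x^*$ if necessary (this changes neither $|x^*|$ nor the relevant distances, and turns an approximant of $-x^*$ into one of $x^*$), I may assume $x^*(c)>0$ for all $c\in C$, and in particular $\beta:=\sup_{c\in C}x^*(c)>0$.

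Next I would apply Bishop--Phelps to $-x^*$: choose $g^*$ with $\|g^*+x^*\|<\delta$ attaining its supremum on $C$ at some point $c_0$, and put $y^*:=-g^*$, so that $\|y^*-x^*\|<\delta$ and $y^*$ attains its infimum $\alpha':=\inf_{c\in C}y^*(c)=y^*(c_0)$ on $C$. Taking $\delta<\min\{\eps,\beta/M\}$ guarantees both $\|y^*-x^*\|<\eps$ and $\sup_{c\in C}y^*(c)\geq\beta-\delta M>0$, using $y^*(c)\geq x^*(c)-\delta M$ for every $c\in C$.

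It then remains to verify that $|y^*|$ attains its minimum on $C$, which I would do by splitting on the sign of $\alpha'$. If $\alpha'\geq 0$, then $y^*(c)\geq 0$ for all $c$, whence $|y^*(c)|=y^*(c)$ and $\min_{c\in C}|y^*(c)|=\alpha'$ is attained at $c_0$. If $\alpha'<0$, then $\alpha'<0<\sup_{c\in C}y^*(c)$, and since $y^*(C)$ is an interval it contains $0$, so $y^*(c_1)=0$ for some $c_1\in C$ and $|y^*|$ attains its minimum $0$ at $c_1$. In either case $y^*$ belongs to the desired set, giving density. I expect this final dichotomy to be the only delicate point: perturbing $x^*$ so as to attain the infimum closest to $0$ may push some values of $y^*$ below $0$, but the convexity of the scalar image $y^*(C)$ converts such an overshoot into a genuine zero of $y^*$, so that attainment of the minimum of $|y^*|$ is never lost.
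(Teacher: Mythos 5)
Your proof is correct, and while it leans on the same key external ingredient as the paper --- the Bishop--Phelps theorem for bounded closed convex sets, applied to $-x^*$ so that attaining a supremum becomes attaining an infimum --- its case decomposition and its handling of the delicate case are genuinely different. The paper splits on whether $\inf_{x\in C}|x^*(x)|$ is zero or positive: when it is positive, it argues much as you do (connectedness of $C$ gives a constant sign, then Bishop--Phelps with a perturbation $z^*$ taken small enough that $x^*\mp z^*$ keeps a constant sign on $C$, so attainment of the maximum of the negative functional is attainment of the minimum of its modulus); but when the infimum is zero it avoids Bishop--Phelps entirely and instead builds an explicit correction $x^*-\frac{x^*(x_0)}{z^*(x_0)}\,z^*$, where $x_0\in C$ nearly minimizes $|x^*|$ and $z^*$ norms $x_0$, which vanishes exactly at $x_0$. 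You instead split on whether $0\in x^*(C)$, which makes the first case free, and you absorb the remaining case --- including the troublesome situation where $\inf_C|x^*|=0$ is not attained, e.g.\ $x^*(C)=(0,1]$ --- into a single Bishop--Phelps application followed by your final dichotomy: either the attained infimum of $y^*$ is nonnegative and the minimum of $|y^*|$ is attained there, or it is negative, and then the interval $y^*(C)$ must contain $0$, so $|y^*|$ attains the minimum value $0$ at a zero of $y^*$. That dichotomy is the real payoff of your route: you never need the perturbed functional to preserve the sign of $x^*$ on $C$, only $\sup_C y^*>0$, which your normalization $\delta<\beta/M$ guarantees; by contrast, the paper's sign-preservation step requires the perturbation to be dominated by $\inf_C x^*$ and, strictly speaking, to be rescaled by $\sup_{x\in C}\|x\|$ (a point the paper leaves implicit, while your $M$ handles it explicitly). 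What the paper's route buys in exchange is an elementary, Bishop--Phelps-free treatment of the zero-infimum case, with an explicit formula for both the approximating functional and the point of attainment.
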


\begin{proof} Let $x_0^* \in X^*$ and $\eps>0$ be given. First, if $\inf_{x \in C} |x_0^*(x)|=0$, let us define $A=\{ x\in C : |x_0^*(x)|<\epsilon\}$. If $0$ is a limit point of $A$, then $C$ contains $0$ and  $|x_0^*|$ attains its minimum $0$ at $0$. So we may assume that $\lam :=\inf \{ \|x\| : x\in A\}>0$. Fix $x_0\in A$ such that $|x_0^*(x_0)|<\lam\eps$ and choose $z^* \in S_{X^*}$ such that $ z^*(x_0) =\|x_0\|$. If we consider $x^* \in X^*$ given by
$$
x^*(x) := x_0^*(x) - \frac{z^*(x)}{z^*(x_0)} x_0^*(x_0) \qquad \text{for } x \in X,
$$
then $\|x^*-x_0^*\|\leq \eps$ and $|x^*|$ attains its minimum 0 at $x_0 \in C$.

So suppose that $\inf_{x \in C} |x_0^*(x)| >0$. By the connectedness of $C$, we can see that either $x_0^*(x) >0$ for all $x \in C$ or $x_0^*(x) <0$ for all $x \in C$. Assume first that $x_0^*(x)>0$ for all $x \in C$. Then, it follows that $-x_0^*(x)<0$ for all $x \in C$, and thus by the Bishop-Phelps theorem for bounded closed convex set there exists $z^* \in X^*$ with $\|z^*\|<\min \{ \eps, \inf_{x \in C} x_0^*(x) \}$ such that $-x_0^*+z^*$ attains its maximum on $C$. Arguing from the fact that $(-x_0^*+z^*)(x) < 0$ for all $x \in C$, we can deduce that $|x_0^*-z^*|$ attains its minimum on $C$. On the other hand, if $x_0^*(x) <0$ for all $x \in C$, then we can find $z^* \in X^*$ with $\|z^*\|<\min\{ \eps, -\inf_{x \in C} x_0^*(x)\}$ such that $x_0^*+z^*$ attains its maximum on $C$. Similarly, we have that $|x_0^*+z^*|$ attains its minimum on $C$.
\end{proof}

Now, let us consider a specific case of $X$ when it has the RNP. Recall that the Radon-Nikod\'ym property (RNP in short) is a very important concept of a Banach space which provides many geometric reformulations of a Banach space; for instance, it was shown in \cite{B2} that the RNP is closely related to the denseness of the set of norm attaining operators. We refer to \cite[p.217]{DU} for a plenty of characterizations associated to the RNP. In \cite{C2}, the author showed that if either $X$ or $Y$ has the RNP, then $\MA(X,Y)$ is dense in $\Lin(X,Y)$ for every Banach space $Y$. On the one hand, it is shown in \cite{AP} that if $X$ has the RNP, then $\NRA(X)$ is dense in $\Lin(X)$. Before we proceed to demonstrate our main result, we shall remind of a well-known optimization lemma on a space with the RNP.

Recall that a nonempty bounded closed convex set $D \subseteq X$ is an \emph{RNP set} if every subset of $D$ is dentable (see \cite{B2}). A Banach space has the RNP if its unit ball has the RNP, so every bounded closed convex subset of a Banach space with the RNP is an RNP set. Recall also that a functional $x^* \in X^*$ \emph{strongly exposes} $D \subseteq X$ if there exists a point $x_0 \in D$ such that $x^*(x_0) = \sup_{x \in D} x^*(x)$ and $\{x_n\}_{n=1}^\infty \subseteq D$ converges to $x_0$ whenever $\lim_n x^*(x_n)=x^*(x_0)$. The following celebrated theorem says that there are densely many strongly exposing functionals for an RNP set.

\begin{lemma}[\mbox{Stegall's optimization principle, \cite[Theorem 14]{S}}]\label{lemma:optimization}
Let $D$ be a bounded RNP set of a Banach space $X$ and $\phi: D \to [-\infty, \infty)$ be an upper semicontinuous and bounded above function. If $\phi$ is not identically $-\infty$ on $D$, then the set
$$
\{ x^* \in X^*: \phi + \re x^* \textup{ strongly exposes } D \}
$$
is a $G_\delta$-dense subset of $X^*$.
\end{lemma}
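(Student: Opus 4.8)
The plan is to run a Baire category argument in $X^*$, using the only real input we have about $D$: as an RNP set every subset of it is dentable, so $D$ itself admits slices of arbitrarily small diameter. For a function $\psi : D \to [-\infty,\infty)$ that is bounded above and not identically $-\infty$, write $M(\psi) = \sup_{x\in D}\psi(x) \in \R$ and, for $\alpha>0$, let $S(\psi,\alpha) = \{x \in D : \psi(x) > M(\psi)-\alpha\}$ be the corresponding slice, which is nonempty. For each $n\in\N$ I would introduce
$$
V_n = \bigl\{ x^* \in X^* : \diam S(\phi + \re x^*,\alpha) < 1/n \text{ for some } \alpha > 0 \bigr\},
$$
and aim to prove that each $V_n$ is open and dense. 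Granting this, $\bigcap_n V_n$ is $G_\delta$-dense by the Baire category theorem, and it is contained in the set of the statement: for $x^* \in \bigcap_n V_n$ the slices $S(\phi+\re x^*,\alpha)$ have diameters tending to $0$ as $\alpha\downarrow 0$, so every maximizing sequence is eventually Cauchy and, $D$ being closed in the complete space $X$, converges to a common point $x_0\in D$; upper semicontinuity of $\phi$ then forces $(\phi+\re x^*)(x_0)=M(\phi+\re x^*)$, while the diameter condition forces every maximizing sequence to converge to $x_0$, which is precisely strong exposition.

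Openness of $V_n$ is the routine half. With $R=\sup_{x\in D}\|x\|<\infty$, passing from $x^*$ to $y^*$ changes $\phi+\re x^*$ uniformly on $D$ by at most $\|y^*-x^*\|R$, so $M(\cdot)$ moves by at most the same amount. A short computation then gives the inclusion $S(\phi+\re y^*,\,\alpha-2\|y^*-x^*\|R) \subseteq S(\phi+\re x^*,\alpha)$, so once $\|y^*-x^*\|<\alpha/(2R)$ the left-hand slice is nonempty and of diameter $<1/n$; hence $y^*\in V_n$.

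The dense half is the heart of the matter and the one place where the nonlinearity of $\phi$ bites. Fix $x^*$ and $\eps>0$; I want $y^*$ with $\|y^*\|<\eps$ giving a small slice for $\phi+\re(x^*+y^*)$. Since $\phi$ is merely upper semicontinuous, dentability of $D$ does not directly produce a small slice of $\phi+\re x^*$, so I would lift to $X\times\R$: after absorbing $\re x^*$ into $\phi$ and capping below, form the bounded closed convex set $\tilde D = \overline{\co}\{(x,t): x\in D,\ L\le t\le (\phi+\re x^*)(x)\}$, the relevant part of the hypograph. As a closed convex subset of $D\times[L,M(\phi+\re x^*)]$, and since products of RNP sets with compact intervals are again RNP sets, $\tilde D$ is a bounded RNP set in $X\times\R$ and therefore dentable. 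A small-diameter slice of $\tilde D$ is cut by a functional $(z^*,r)\in X^*\times\R$; arranging $r>0$ bounded below, dividing by $r$ turns the slice into one for $(\phi+\re x^*)(x)+\re(\tfrac1r z^*)(x)$ over $D$, i.e.\ for $\phi+\re(x^*+\tfrac1r z^*)$, and upper semicontinuity of $\phi$ pushes the small $X$-diameter slice of $\tilde D$ down to a small-diameter slice on $D$. Taking the dentability parameter small then forces simultaneously $\|z^*\|/r<\eps$ and slice-diameter $<1/n$.

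The obstacle I expect to fight is exactly this descent: ensuring the exposing functional of $\tilde D$ can be taken with vertical component $r$ bounded away from $0$, so that normalizing keeps the $X$-perturbation small, and that the small diameter measured in $\tilde D$ controls the diameter of the pushed-down slice in $D$ despite the vertical freedom in $t$. I would handle the first point by selecting the slice near the upper boundary of $\tilde D$, where outward normals necessarily have positive vertical component, and the second by using upper semicontinuity to bound the vertical spread of $\tilde D$ over a small slice. With density and openness established, the Baire category theorem together with the nested-slice argument of the first paragraph completes the proof.
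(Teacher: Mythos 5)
The paper itself does not prove this lemma --- it quotes it from Stegall's paper \cite[Theorem 14]{S} --- so your proposal has to be judged against the known proofs of Stegall's principle. Your skeleton is the standard one and is sound: the sets $V_n$ are indeed open (your perturbation estimate is correct), and the final assembly (nested slices of vanishing diameter $\Rightarrow$ maximizing sequences Cauchy $\Rightarrow$ upper semicontinuity forces attainment and strong exposure) works because an RNP set is by definition closed. The hypograph lifting to $X\times\R$ is also the standard device. The genuine gap is exactly at the point you flagged as "the obstacle I expect to fight," and your proposed fix does not close it. Dentability of $\tilde D$ (equivalently, of any of its subsets) only asserts the \emph{existence}, for each $\eps'>0$, of some slice of diameter $<\eps'$, cut by \emph{some} functional $(z^*,r)$; it gives no control whatsoever over the direction of that functional. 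You cannot "select the slice near the upper boundary": the slice is handed to you, and for a set like $\tilde D$, which has a flat bottom $D\times\{L\}$ and vertical sides, the small slices produced by dentability may perfectly well be cut by functionals with $r\le 0$, or with $r>0$ but $\|z^*\|/r$ huge. Scaling $(z^*,r)$ together with the slice width does not change the ratio, so no normalization trick helps.

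What your descent actually requires is that the functionals cutting small slices of the convex set $\tilde D$ form a \emph{dense} subset of $(X\times\R)^*$, so that you may choose one close to the vertical functional $(0,1)$, forcing $r$ near $1$ and $\|z^*\|$ near $0$. But that density statement is precisely the linear case ($\phi\equiv 0$) of the theorem you are proving --- it is Bourgain's theorem (cf. \cite{B2}) that for a bounded closed convex RNP set the strongly exposing (equivalently, small-slice-producing) functionals are a dense $G_\delta$ --- and it does not follow from dentability by any soft argument: its proof needs Bourgain's "superlemma" and a delicate iteration. (A toy illustration of the gap: on $B_{\ell_1}$ the functional $(1,1,1,\dots)$ cuts only slices of diameter $2$, even though every subset of $B_{\ell_1}$ is dentable; goodness of a \emph{given} direction is a much stronger, category-type fact.) So your proof is circular at its core: the correct architecture is to first prove the linear density theorem and then run your lifting, which then does work --- once $(z^*,r)$ strongly exposes $\tilde D$ with $r>0$, the exposed point sits on the graph of $\phi$ and $\phi+\re(z^*/r)$ is strongly maximized, and your worry about vertical spread evaporates since the variational slice embeds into the slice of $\tilde D$ via $x\mapsto(x,\phi(x))$. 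A secondary, much smaller gap: the claim that $D\times[L,M]$ is again an RNP set is true but is asserted without proof.
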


For the next lemma, we follow the proof of \cite[Lemma 2.3]{AP}.

\begin{lemma}\label{lemma:semicontinuous}
Let $X$ be a Banach space and $T \in \Lin(X)$ with $c(T)>0$. Define $\phi_T: B_X \to (-\infty, \infty]$ by
\begin{displaymath}
\phi_T(x)=\left\{\begin{array}{@{}cl}
\displaystyle \phantom{.} \frac{1}{\|x\|}\min \left\{\left|x^*\left(T\left(\frac{x}{\|x\|}\right)\right)\right|: \left(\frac{x}{\|x\|},x^*\right) \in \Pi(X)\right\} & \text{if } x \neq 0 \\\\
\infty & \text{if } x=0 \\
\end{array} \right.
\end{displaymath}
Then, $\phi_T$ is bounded below and lower semicontinuous.
\end{lemma}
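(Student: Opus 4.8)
The plan is to prove the two assertions in sequence, first checking that the minimum defining $\phi_T$ actually exists, then establishing the lower bound, and finally proving lower semicontinuity, which I expect to be the substantive part. For well-definedness when $x\neq 0$, write $\hat x=x/\|x\|$. The set $\{x^*\in B_{X^*}:x^*(\hat x)=1\}$ is weak$^*$-closed, hence weak$^*$-compact, and since $x^*(\hat x)=1$ with $\|\hat x\|=1$ forces $\|x^*\|=1$, it coincides with $\{x^*:(\hat x,x^*)\in\Pi(X)\}$, which is nonempty by Hahn--Banach. As $x^*\mapsto|x^*(T\hat x)|$ is weak$^*$-continuous, the infimum over this compact set is attained, so the ``$\min$'' is legitimate.

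For the lower bound, observe that whenever $(\hat x,x^*)\in\Pi(X)$ one has $|x^*(T\hat x)|\geq c(T)$ directly from the definition of the Crawford number, so the minimum is $\geq c(T)$ and thus $\phi_T(x)\geq c(T)/\|x\|\geq c(T)>0$ for $x\neq 0$ (using $\|x\|\leq 1$ on $B_X$), while $\phi_T(0)=\infty$. Hence $\phi_T$ is bounded below by $c(T)$.

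The main work is lower semicontinuity, and since $B_X$ carries the norm metric it suffices to argue sequentially. Let $x_n\to x$. The decisive case is in fact $x=0$: here I would invoke the hypothesis $c(T)>0$ together with the bound just proved, $\phi_T(x_n)\geq c(T)/\|x_n\|\to\infty$, which gives $\liminf_n\phi_T(x_n)=\infty=\phi_T(0)$. This is exactly the place where $c(T)>0$ is indispensable. For $x\neq 0$, pass to a subsequence realizing $\liminf_n\phi_T(x_n)=:L$ and, discarding finitely many terms, assume $x_n\neq 0$, so that $y_n:=x_n/\|x_n\|\to\hat x:=x/\|x\|$ in norm; choose minimizers $x_n^*\in S_{X^*}$ with $x_n^*(y_n)=1$ and $\phi_T(x_n)=\|x_n\|^{-1}|x_n^*(Ty_n)|$.

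The obstacle I anticipate is that $\{x_n^*\}$ need not admit a weak$^*$-convergent subsequence in the nonseparable setting, so I would instead extract a weak$^*$-convergent subnet $x_n^*\to x_0^*$ in $B_{X^*}$, retaining the norm convergences $y_n\to\hat x$ and $\phi_T(x_n)\to L$ along the subnet. Then $|x_n^*(y_n)-x_n^*(\hat x)|\leq\|y_n-\hat x\|\to 0$ combined with weak$^*$ convergence gives $x_0^*(\hat x)=\lim x_n^*(y_n)=1$, so $(\hat x,x_0^*)\in\Pi(X)$; similarly $|x_n^*(Ty_n)-x_n^*(T\hat x)|\leq\|T\|\,\|y_n-\hat x\|\to 0$ yields $|x_n^*(Ty_n)|\to|x_0^*(T\hat x)|$. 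Using $\|x_n\|\to\|x\|$ we obtain
$$
L=\frac{|x_0^*(T\hat x)|}{\|x\|}\geq\frac{1}{\|x\|}\min\{|x^*(T\hat x)|:(\hat x,x^*)\in\Pi(X)\}=\phi_T(x),
$$
where the inequality holds because $(\hat x,x_0^*)\in\Pi(X)$ makes $|x_0^*(T\hat x)|$ an admissible value in the minimum defining $\phi_T(x)$. This establishes lower semicontinuity at $x$ and completes the argument.
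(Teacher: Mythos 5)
Your proposal is correct and follows essentially the same route as the paper: both arguments normalize $x_n$ to $y_n$, select minimizing functionals $x_n^*$ with $x_n^*(y_n)=1$, extract a weak$^*$ cluster point (your subnet limit) $x_0^*$, verify $(\hat x,x_0^*)\in\Pi(X)$, and conclude that $|x_0^*(T\hat x)|/\|x\|$ dominates and witnesses $\phi_T(x)$, with $c(T)>0$ used in exactly the same way to rule out the limit point being $0$ (the paper phrases this as closedness of sublevel sets, you as a pointwise $\liminf$ bound, which are equivalent formulations). Your additional check that the minimum in the definition of $\phi_T$ is attained, via weak$^*$-compactness of the face $\{x^*:(\hat x,x^*)\in\Pi(X)\}$, is a point the paper leaves implicit.
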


\begin{proof} We may assume that $\|T\|=1$. It is evident that $\phi_T$ is bounded below. In order to show that it is lower semicontinuous, we claim that the set $$D=\{x\in B_X : \phi_T(x)\le s\}$$ is closed for each $0 < s \leq \infty$. The case $s = \infty$ is clear, so fix any $0<s<\infty$. Assuming $D$ has a limit point, let $x_0$ be a limit point of $D$ and choose a sequence $\{x_n\} \subseteq B_X\setminus \{0\}$ converging to $x_0$ such that $\phi_T(x_n) \leq s$ for all $n \in \N$.

Since $\phi_T(x) \|x\|\ge c(T)>0$ for each $x \in B_X\setminus \{0\}$, we have $s\|x_n\|\ge c(T)>0$ for all $n$. Hence $s\|x_0\|\ge c(T)>0$. So  $x_0\neq 0$.  

We write $y_n = \dfrac{x_n}{ \|x_n\|}$, $y_0 =\dfrac{ x_0}{\|x_0\|}$ and choose each $x_n^* \in S_{X^*}$ so that $x_n^*(y_n)=1$ and $|x_n^*(Ty_n)|=\|x_n\|\phi_T(x_n)$. If we let $x_0^*$ be a weak$^*$-cluster point of $\{x_n^*\}$, then
$$
|1-x_0^*(y_0)| = |x_n^*(y_n)-x_0^*(y_0)| \leq \|y_n-y_0\| + |x_n^*(y_0) - x_0^*(y_0)|
$$
for all $n$, so $x_0^*(y_0)=1$. Also, we have
$$
|x_n^*(Ty_n)-x_0^*(Ty_0)| \leq \|y_n-y_0\| + |x_n^*(Ty_0)-x_0^*(Ty_0)|,
$$
and $\{x_n^*(Ty_n)\}$ converges to $x_0^*(Ty_0)$. Since
$$
\frac{|x_n^*(Ty_n)|}{\|x_n\|} = \phi_T(x_n) \leq s,
$$
we can derive that $\phi_T(x_0) \leq \frac{ |x_0^*(Ty_0)|}{\|x_0\|} \leq s$, which proves the claim.
\end{proof}

Consequently, the following result gives a positive answer to the denseness of Crawford number attaining operators in many cases of Banach spaces.

\begin{theorem}\label{theorem:RNP}
Let $X$ be a Banach space with the RNP. Then, $\CNA(X)$ is dense in $\Lin(X)$.
\end{theorem}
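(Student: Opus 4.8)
The plan is to reduce to the case $c(T)>0$ and then feed the lower semicontinuous functional $\phi_T$ into Stegall's optimization principle, so that a small rank-one perturbation of $T$ attains its Crawford number at a strongly exposed point. By Proposition \ref{prop:radius-0-approx} every operator with $c(T)=0$ already lies in the closure of $\CNA(X)$, so I may fix $T\in\Lin(X)$ with $c(T)>0$, normalize $\|T\|=1$, and fix $\eps$ with $0<\eps<c(T)$. Since $X$ has the RNP, its ball $B_X$ is a bounded RNP set, and by Lemma \ref{lemma:semicontinuous} the function $\phi_T$ is lower semicontinuous and bounded below on $B_X$; hence $-\phi_T\colon B_X\to[-\infty,\infty)$ is upper semicontinuous, bounded above, and not identically $-\infty$. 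Applying Lemma \ref{lemma:optimization} to $-\phi_T$, the functionals $x^*$ for which $-\phi_T+\re x^*$ strongly exposes $B_X$ form a dense set, so I may choose such an $x^*$ with $\|x^*\|<\eps$, together with the point $x_1\in B_X$ at which the strong exposure occurs.

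Next I would verify $x_1\in S_X$, which is the one place the hypothesis $\eps<c(T)$ is used. For $x\in B_X\setminus\{0\}$ with $\|x\|=t$ and $\widehat x=x/\|x\|$, the identities $\phi_T(x)=\phi_T(\widehat x)/t$ and $x^*(x)=t\,x^*(\widehat x)$ give
$$
(-\phi_T+\re x^*)(\widehat x)-(-\phi_T+\re x^*)(x)=(1-t)\left[\frac{\phi_T(\widehat x)}{t}+\re x^*(\widehat x)\right].
$$
Because $\phi_T(\widehat x)\ge c(T)$ while $|x^*(\widehat x)|\le\|x^*\|<c(T)$, the bracket is strictly positive, so for $t<1$ the value at $\widehat x$ strictly exceeds the value at $x$; hence the maximizer $x_1$ has norm one. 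By weak$^*$ compactness of the partner functionals, the minimum defining $\phi_T(x_1)$ is attained at some $x_1^*\in S_{X^*}$ with $(x_1,x_1^*)\in\Pi(X)$.

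I would then perturb $T$ by a rank-one operator built from the Stegall functional, the natural candidate being $Sx=Tx-x^*(x)\,x_1$, so that $\|S-T\|=\|x^*\|<\eps$, and try to show $S\in\CNA(X)$. The intended mechanism is a sandwich: the strong-exposure inequality $\phi_T(x)-\re x^*(x)\ge\phi_T(x_1)-\re x^*(x_1)$ on $S_X$ should be transferred into a lower bound $\phi_S(x)\ge\bigl(\phi_T(x)-\re x^*(x)\bigr)+C$ valid on $S_X$, matched by an upper bound $\phi_S(x_1)\le\bigl(\phi_T(x_1)-\re x^*(x_1)\bigr)+C$ at the exposed point; combining the two would force $c(S)=\inf_{S_X}\phi_S=\phi_S(x_1)$, so that the Crawford number of $S$ is attained at $(x_1,x_1^*)$. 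Attainment of the infimum over partner functionals at $x_1$ is then automatic from weak$^*$ compactness, and lower semicontinuity from Lemma \ref{lemma:semicontinuous} secures the limiting value.

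The hard part is precisely this transfer, and I do not expect the naive candidate to go through unchanged. The Crawford functional is assembled from a modulus and an infimum over partner functionals, whereas the Stegall tilt $-\re x^*$ is linear; since no bounded operator can reproduce $\re x^*(x)$ simultaneously against every partner functional of every $x$, the tilt cannot be implemented exactly, and a crude bound only yields $\phi_S(x)\ge\phi_T(x)-\|x^*\|$, which replaces the signed tilt by $-|x^*(x)|$ and destroys the matching constant. The deeper tension is that the induced tilt in $\phi_S$ is controlled by $\|S-T\|$, hence is forced to be small, so by itself it cannot single out $x_1$ among the near-minimizers of $\phi_T$; the selection must instead be extracted from the strong-exposure gap, which requires tuning the size of the perturbation against the (a priori uncontrolled) modulus of strong exposure and localizing the estimate to a neighborhood of $x_1$, where every partner functional $\widetilde x^*$ satisfies $\widetilde x^*(x_1)\approx1$ and the perturbation genuinely behaves like the desired tilt. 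The complex scalar case, in which the phase of $\widetilde x^*(Tx)$ interacts with the perturbation inside the modulus, will demand the most care and may force a separate treatment of the real and complex fields.
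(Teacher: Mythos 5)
Your setup is the same as the paper's: the reduction to $c(T)>0$ via Proposition \ref{prop:radius-0-approx}, the application of Lemma \ref{lemma:optimization} to $-\phi_T$ from Lemma \ref{lemma:semicontinuous} with $\|z^*\|<\min\{\eps,c(T)\}$, and the verification that the exposed point $x_1$ lies on $S_X$ (which you carry out in more detail than the paper does) are all correct. The genuine gap is that you stop at the decisive step --- showing that a rank-one perturbation of $T$ attains its Crawford number --- and, worse, predict that the natural candidate cannot work. It does work, and the two devices you are missing are elementary. Write $z^*$ for your exposing functional. First, $\phi_T$ is invariant under scalar rotations: $(x,\tilde x^*)\in\Pi(X)$ if and only if $(\mu x,\bar\mu \tilde x^*)\in\Pi(X)$ for $|\mu|=1$, and $|\bar\mu\tilde x^*(T(\mu x))|=|\tilde x^*(Tx)|$. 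Applying the Stegall inequality at the rotation of each $x$ that maximizes $\re z^*(\mu x)$ therefore upgrades it to
$$
-\phi_T(x_1)+\re z^*(x_1)\ \geq\ -\phi_T(x)+|z^*(x)| \qquad\text{for all } x\in B_X,
$$
and taking $x=x_1$ gives $z^*(x_1)=|z^*(x_1)|\geq 0$. So the ``crude bound'' you dismiss --- which in fact gives the pointwise estimate $\phi_S(x)\geq\phi_T(x)-|z^*(x)|$, not merely $\phi_T(x)-\|z^*\|$ --- is exactly what is needed: replacing the signed tilt $\re z^*(x)$ by $|z^*(x)|$ costs nothing, because the exposure inequality survives the replacement.

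Second, put a unimodular phase into the perturbation: with $x_1^*\in S_{X^*}$ chosen so that $x_1^*(x_1)=1$ and $|x_1^*(Tx_1)|=\phi_T(x_1)$, define $Sx:=Tx-\lam z^*(x)x_1$, where $\lam\in S_{\mathbb{K}}$ satisfies $\lam|x_1^*(Tx_1)|=x_1^*(Tx_1)$; then $\|S-T\|\leq\|z^*\|<\eps$. For every $(x,\tilde x^*)\in\Pi(X)$,
$$
|\tilde x^*(Sx)|\ \geq\ |\tilde x^*(Tx)|-|z^*(x)|\,|\tilde x^*(x_1)|\ \geq\ \phi_T(x)-|z^*(x)|\ \geq\ \phi_T(x_1)-|z^*(x_1)|,
$$
while at $(x_1,x_1^*)$ the phases cancel exactly: since $z^*(x_1)\geq0$ and $\|z^*\|<c(T)\leq\phi_T(x_1)$,
$$
|x_1^*(Sx_1)|=\bigl|\lam|x_1^*(Tx_1)|-\lam z^*(x_1)\bigr|=\phi_T(x_1)-|z^*(x_1)|.
$$
Sandwiching, $c(S)=|x_1^*(Sx_1)|$, so $S\in\CNA(X)$ attains its Crawford number at $(x_1,x_1^*)$. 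Note that none of the machinery you anticipate is required: no localization near $x_1$, no tuning against a modulus of strong exposure, and no separate real/complex treatment --- the phase $\lam$ together with the rotation absorbs the complex case entirely (indeed only attainment of the supremum at $x_1$, not the full strong exposure property, is ever used). This is precisely how the paper proves Theorem \ref{theorem:RNP}; your framework was the right one, and the obstruction you describe is illusory.
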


\begin{proof}
Let $\eps>0$ and $T \in \Lin(X)$ be given. If $c(T)=0$, then $T$ can be approximated by $S \in \CNA(X)$ due to Proposition \ref{prop:radius-0-approx}. So we may assume that $c(T) > 0$. By Lemma \ref{lemma:optimization}, there exists $x_0 \in B_X$ and $z^* \in X^*$ with $0<\|z^*\|<\min\{\eps,c(T)\}$ such that
$$
- \phi_T(x_0) + \re z^*(x_0)\geq - \phi_T(x) + \re z^*(x) \qquad \text{for all } x\in B_X,
$$  
where $\phi_T$ is a function defined in Lemma \ref{lemma:semicontinuous}. By a suitable rotation, the above equation can be changed into
$$
-\phi_T(x_0) + \re z^*(x_0) \geq - \phi_T(x) + |z^*(x)| \qquad \text{for all } x \in B_X.
$$
Hence $x_0\in S_X$ and $ \re z^*(x_0) =|z^*(x_0)|$. Choose $x_0^* \in S_{X^*}$  satisfying $x_0^*(x_0)=1$ and $\phi_T(x_0) = |x_0^*(Tx_0)|$.

Now, define an operator $S \in \Lin(X)$ by $S(x) := Tx - \lam z^*(x) x_0$ where $\lam \in S_\mathbb{K}$ is chosen so that $ \lam|x_0^*(Tx_0)| = x_0^*(Tx_0) $. It follows that
$$
\|S-T\| = \sup_{x \in B_X} |\lam z^*(x)| \|x_0\| < \eps.
$$
Moreover, $S$ attains its Crawford number. Indeed, for every $(x,x^*) \in \Pi(X)$, we have
\begin{align*}
|x^*(Sx)| \geq |x^*(Tx)| - |\lam z^*(x) x^*(x_0)| &\geq \phi_T(x) - |z^*(x)| \\
&\geq \phi_T(x_0) - |z^*(x_0)|.
\end{align*}
But on the other hand, we have from $\|z^*\|<c(T)$ that
$$
|x_0^*(Sx_0)| = \bigl| \lam |x_0^*(Tx_0)| - \lam z^*(x_0) x_0^*(x_0) \bigr| = |x_0^*(Tx_0)| -|z^*(x_0)|,
$$
so $S$ attains its Crawford number at $(x_0,x_0^*) \in \Pi(X)$.
\end{proof}



\subsection{The Bishop-Phelps-Bollob\'as property for the Crawford number}

In \cite{KLM2}, the authors defined a notion of the Bishop-Phelps-Bollob\'as property for numerical radius, which is a stronger condition allowing one to approximate operator and the state which ``almost" attains its numerical radius in both ways. In a similar spirit, we can define a somewhat weaker notion in terms of the Crawford number which was also defined in \cite{KLM2}. More precisely, we approximate both operator and the state simultaneously in the same sense, but it may not preserve the Crawford number between the original operator and the new operator.
\begin{definition}
A Banach space $X$ is said to have the \emph{weak Bishop-Phelps-Bollob\'as property for the Crawford number} if for every $\eps>0$, there exists $\eta(\eps)>0$ such that whenever $T \in \Lin(X)$ and $(x,x^*) \in \Pi(X)$ satisy that $|x^*(Tx)|<c(T)+\eta(\eps)$, we can find $S \in \CNA(X)$ and $(z,z^*) \in \Pi(X)$ such that
$$
|z^*(Sz)|=c(S), \quad \|S-T\|<\eps, \quad \|z-x\|<\eps \quad \text{and} \quad \|z^*-x^*\|<\eps.
$$
\end{definition}

Observe that if $c(T)=0$, then the argument of Proposition \ref{prop:radius-0-approx} gives that we are able to find a new operator $S$ without even perturbating $(x,x^*) \in \Pi(X)$. We now prove the minimum analogue of \cite[Proposition 4]{KLM1}. Although the argument is similar, we give a detail of its proof for completeness. For the discussion on natural extension of the Bishop-Phelps-Bollob\'as property for the Crawford number, we refer to Section \ref{subsection:BPBp}.

Recall that a Banach space $X$ is said to be \emph{uniformly convex} if the \emph{modulus of convexity} $\delta_X$ of $X$ defined by
$$
\delta_X(\eps) := \inf \left\{ 1 - \frac{\|x+y\|}{2} : x,y \in B_X, \|x-y\| \geq \eps \right\}
$$
is positive for every $0<\eps \leq2$. A Banach space $X$ is \emph{uniformly smooth} if the \emph{modulus of smoothness} $\rho_X$ of $X$ defined by
$$
\rho_X(\tau) := \sup \left\{ \frac{\|x+\tau y\| + \|x-\tau y\|}{2} - 1 : x, y \in S_X \right\}
$$
satisfies that $\lim_{\tau \to 0} \rho(\tau)/\tau=0$. It is well-known that every uniformly convex is reflexive (hence it has the RNP) and that $X$ is uniformly convex (resp. uniformly smooth) if and only if $X^*$ is uniformly smooth (resp. uniformly convex).

\begin{prop}\label{prop:weak-BPBp}
Let $X$ be a uniformly convex and uniformly smooth Banach space. Then, $X$ has the weak Bishop-Phelps-Bollob\'as property for the Crawford number.
\end{prop}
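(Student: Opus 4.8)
The plan is to argue essentially as in the proof of Theorem \ref{theorem:RNP}, but to \emph{localize} the strongly exposed point so that it sits close to the prescribed state, and to treat operators of small Crawford number separately so that the resulting $\eta(\eps)$ depends only on $\eps$ and the moduli $\delta_X,\rho_X$. Fix $\eps>0$; a single $\eta=\eta(\eps)>0$ will be pinned down at the end. Given $T$ (normalized so that $\|T\|=1$) and $(x,x^*)\in\Pi(X)$ with $|x^*(Tx)|<c(T)+\eta$, I first dispose of the case $c(T)+\eta\le\eps$: then $|x^*(Tx)|<\eps$, so the rank-one recipe of Proposition \ref{prop:radius-0-approx} applied at $(x,x^*)$, namely $Sy:=Ty-x^*(Tx)\,y$, yields $S$ with $\|S-T\|<\eps$ and $x^*(Sx)=0=c(S)$; taking $(z,z^*)=(x,x^*)$ settles this case with no perturbation of the state. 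Hence I may assume $c(T)>\eps-\eta$, a definite lower bound $c(T)\gtrsim\eps$ that keeps all later perturbations small relative to $c(T)$.

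For the main case, uniform smoothness makes the duality mapping $j:S_X\to S_{X^*}$ single-valued and uniformly continuous, so that $x^*=j(x)$ and $\phi_T(y)=|j(y)(Ty)|$ for each $y\in S_X$, where $\phi_T$ is the lower semicontinuous, bounded below function of Lemma \ref{lemma:semicontinuous}. I would introduce a \emph{pre-perturbation} that pulls the optimum toward $x$: set $\beta:=\sqrt\eta$ and apply Stegall's principle (Lemma \ref{lemma:optimization}) to the upper semicontinuous, bounded above function $-\phi_T+\beta\,\re x^*$ on the RNP set $B_X$. This yields, arbitrarily small in norm, a functional $u^*$ with $\|u^*\|\le\eta$ such that $-\phi_T+\re v^*$ strongly exposes $B_X$ at some $x_0\in B_X$, where $v^*:=\beta x^*+u^*$. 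The rotation argument of Theorem \ref{theorem:RNP} (using $\phi_T(\theta\,\cdot)=\phi_T(\cdot)$ for $|\theta|=1$) then upgrades the exposition to $-\phi_T(x_0)+\re v^*(x_0)\ge-\phi_T(y)+|v^*(y)|$ for all $y\in B_X$, whence $x_0\in S_X$ and $v^*(x_0)=|v^*(x_0)|\ge0$.

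The crux is the localization. Testing maximality at $y=x$ and using $\phi_T(x_0)\ge c(T)$, $\phi_T(x)<c(T)+\eta$, and $|v^*(x)|\ge\beta-\|u^*\|$, I obtain $|v^*(x_0)|>\beta-\eta-\|u^*\|$; since $\re v^*(x_0)=|v^*(x_0)|$ and $\re v^*(x_0)\le\beta\,\re x^*(x_0)+\|u^*\|$, this forces $\re x^*(x_0)>1-(\eta+2\|u^*\|)/\beta\ge 1-3\sqrt\eta$, a bound independent of $T$. Uniform convexity of $X$ (with $x^*=j(x)$) converts the near-norming $\re x^*(x_0)\to1$ into $\|x_0-x\|\le\omega(3\sqrt\eta)$ for a modulus $\omega$ depending only on $\delta_X$, and uniform continuity of $j$ then yields $\|x_0^*-x^*\|$ small, where $x_0^*:=j(x_0)$. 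Finally I define $Sy:=Ty-\lam\,v^*(y)\,x_0$ with $\lam\in S_{\mathbb{K}}$ chosen so that $\lam|x_0^*(Tx_0)|=x_0^*(Tx_0)$; then $\|S-T\|=\|v^*\|\le\sqrt\eta+\eta$, and the computation of Theorem \ref{theorem:RNP} gives $|y^*(Sy)|\ge\phi_T(y)-|v^*(y)|\ge\phi_T(x_0)-|v^*(x_0)|=|x_0^*(Sx_0)|$ for all $(y,y^*)\in\Pi(X)$, so $S$ attains its Crawford number at $(z,z^*):=(x_0,x_0^*)$. The bound $c(T)>\|v^*\|$ (from $c(T)\gtrsim\eps$) is exactly what lets the absolute value be removed in the last equality. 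Choosing $\eta=\eta(\eps)$ so small that $\sqrt\eta+2\eta<\eps$, $\omega(3\sqrt\eta)<\eps$, and $\|x_0^*-x^*\|<\eps$ makes all four requirements hold uniformly in $T$.

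I expect the main obstacle to be precisely this localization: an unguided application of Stegall, as in Theorem \ref{theorem:RNP}, controls the size of the perturbation but says nothing about \emph{where} $B_X$ is exposed, so the exposed point need not be near $x$. Adding $\beta\,\re x^*$ before invoking Stegall, together with the quantitative ``almost-norming implies almost-equal'' estimate from uniform convexity and the uniform continuity of the duality map from uniform smoothness, is what ties the new state to the old one; keeping $\eta$ independent of $T$ further requires the preliminary split according to the size of $c(T)$.
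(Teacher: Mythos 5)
Your proof is correct, but it takes a genuinely different route from the paper. The paper never invokes Stegall's principle here: it runs the iterative scheme of \cite[Proposition 4]{KLM1}, constructing $T_{n+1}=T_n+\lam_{n+1}\frac{\eps^{n+1}}{4^{n+1}}x_n^*(\cdot)x_n$ together with states $(x_n,x_n^*)$ that almost attain $c(T_n)$, then using the $1$-Lipschitz continuity of $c(\cdot)$ and the uniform convexity of \emph{both} $X$ and $X^*$ to show the operators and states form Cauchy sequences, and passing to the limit; this yields the explicit modulus $\eta(\eps)=\frac{\eps}{4}\min\{\delta_X(\eps/4),\delta_{X^*}(\eps/4)\}$. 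You instead make a one-shot perturbation: after splitting off the case of small $c(T)$ (needed to keep $\eta$ independent of $T$, and correctly handled by the rank-one trick of Proposition \ref{prop:radius-0-approx}), you rerun the proof of Theorem \ref{theorem:RNP} with the objective tilted to $-\phi_T+\re(\beta x^*+u^*)$ before applying Stegall, so that testing maximality at $y=x$ forces $\re x^*(x_0)>1-3\sqrt{\eta}$; uniform convexity then pins $x_0$ near $x$, and uniform smoothness (single-valuedness and uniform continuity of the duality map $j$, with $x^*=j(x)$, $x_0^*=j(x_0)$) pins the functionals. The estimates check out: $\phi_T(x)\le|x^*(Tx)|<c(T)+\eta$, $\phi_T(x_0)\ge c(T)$, the rotation giving $\re v^*(x_0)=|v^*(x_0)|$, and $c(T)>\|v^*\|$ (from the case split) letting you drop the absolute value in $|x_0^*(Sx_0)|=\phi_T(x_0)-|v^*(x_0)|$, so your $\eta$ depends only on $\eps$, $\delta_X$ and the modulus of continuity of $j$, as required. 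What each approach buys: the paper's argument is self-contained and produces a closed-form $\eta(\eps)$ at the cost of inductive bookkeeping; yours is more conceptual, reuses the RNP machinery (via reflexivity of uniformly convex spaces), and isolates the real difficulty---localizing the exposed point---in a single tilt of the variational principle. Two cosmetic remarks: the normalization $\|T\|=1$ you impose is never actually used (fortunately, since rescaling is not harmless for this property, whose definition quantifies over all $T$); and the last condition in your choice of $\eta$ should be phrased through the modulus of uniform continuity of $j$ evaluated at $\omega(3\sqrt{\eta})$, rather than by referring to $x_0^*$, which does not yet exist when $\eta$ is fixed.
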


\begin{proof}
Let $\eps>0$ be given. If we write by $\delta_X(\eps)$ the modulus of convexity of $X$, we may set a constant $\eta$ depending on $\eps$ by
$$
\eta(\eps) := \frac{\eps}{4} \min \left\{\delta_X\left(\frac{\eps}{4}\right),\delta_{X^*}\left(\frac{\eps}{4}\right)\right\}>0
$$
since $X^*$ is also uniformly convex. Suppose that $T \in \Lin(X)$ and $(x,x^*) \in \Pi(X)$ satisfy $|x^*(Tx)|<c(T)+\eta(\eps)$. We put $(T_0,x_0,x_0^*):=(T,x,x^*)$ and assume that $T_n$ is constructed. Define $T_{n+1} \in \Lin(X)$ inductively by
$$
T_{n+1}(x) := T_nx + \lam_{n+1} \frac{\eps^{n+1}}{4^{n+1}} x_n^*(x)x_n
$$
for each $x \in X$, where $\lam_n$ is chosen so that
$$
|\lam_n|=1 \quad \text{and} \quad x_n^*(T_nx_n) = -\lam_n|x_n^*(T_nx_n)|.
$$
Here, we choose $(x_n,x_n^*) \in S_X \times S_{X^*}$ with $|x_n^*(x_n)|=1$ such that $x_n^*(x_{n-1})=|x_n^*(x_{n-1})|$ and
$$
|x_n^*(T_nx_n)| \leq c(T_n) + \eta\left(\frac{\eps^{n+1}}{4^{n+1}}\right).
$$
Then we obtain for each $n \in \N$ that
$$
\|T_{n+1} - T_n\| \leq \frac{\eps^{n+1}}{4^{n+1}}.
$$
It follows from above that $\{T_n\}$ is convergent, say to $S \in \Lin(X)$. Notice that $c(T_n)$ are convergent since $c(\cdot)$ is continuous with respect to the operator norm. Indeed, given operators $P, Q\in \mathcal{L}(X)$ and for each $(x, x^*)\in \Pi(X)$, 
\[ c(P) \leq |x^*(Px)| \leq |x^*(Px-Qx)| + |x^*(Qx)| \leq \|P-Q\| + |x^*(Qx)|.\]
By taking infimum, we get $c(P) \leq \|P-Q\| + c(Q)$. Hence
\[|c(P)-c(Q)| \leq ||P-Q\|,\]
so $\{c(T_n)\}$ also converges. It remains to show that sequences $\{x_n\}$ and $\{x_n^*\}$ are both convergent to some points $x_\infty$ and $x_\infty^*$ respectively so that $S$, $x_\infty$ and $x_\infty^*$ satisfy the desired condition. Indeed, we have from the construction that
\begin{align*}
c(T_{n+1}) +\eta\left(\frac{\eps^{n+2}}{4^{n+2}}\right) &\geq |x_{n+1}^*(T_{n+1}x_{n+1})| \\
&= \left|x_{n+1}^*(T_nx_{n+1}) + \lam_{n+1} \frac{\eps^{n+1}}{4^{n+1}} x_n^*(x_{n+1})x_{n+1}^*(x_n)\right| \\
&\geq |x_{n+1}^*(T_nx_{n+1})| - \frac{\eps^{n+1}}{4^{n+1}} |x_{n+1}^*(x_n)| \\
&\geq c(T_n) - \frac{\eps^{n+1}}{4^{n+1}} x_{n+1}^*(x_n),
\end{align*}
and also that
\begin{align*}
c(T_{n+1}) &\leq |x_n^*(T_{n+1}x_n)| \\
&= \left|x_n^*(T_nx_n) + \lam_{n+1} \frac{\eps^{n+1}}{4^{n+1}}\right| \\
&= |x_n^*(T_nx_n)| - \frac{\eps^{n+1}}{4^{n+1}} \\
&\leq c(T_n) + \eta\left( \frac{\eps^{n+1}}{4^{n+1}} \right) - \frac{\eps^{n+1}}{4^{n+1}}.
\end{align*}
These sum up to an equation
\begin{align*}
x_{n+1}^*(x_n) &\geq 1 - \frac{4^{n+1}}{\eps^{n+1}} \left[ \eta\left( \frac{\eps^{n+1}}{4^{n+1}} \right) + \eta\left(\frac{\eps^{n+2}}{4^{n+2}}\right) \right] \\
&\geq 1- 2\,\frac{4^{n+1}}{\eps^{n+1}} \eta\left( \frac{\eps^{n+1}}{4^{n+1}} \right)
\end{align*}
since $\eta$ is decreasing. Thus by uniform convexity, we have
$$
\left\| \frac{x_n+x_{n+1}}{2} \right\| \geq x_{n+1}^* \left( \frac{x_n+x_{n+1}}{2} \right) \geq 1- \delta_X \left(\frac{\eps^{n+1}}{4^{n+2}}\right)
$$
and also that
$$
\left\| \frac{x_n^*+x_{n+1}^*}{2} \right\| \geq \left( \frac{x_n^*+x_{n+1}^*}{2} \right) (x_n) \geq 1- \delta_{X^*} \left(\frac{\eps^{n+1}}{4^{n+2}}\right)
$$
assuming $\eps<4$. This leads to that
$$
\|x_n-x_{n+1}\| \leq \frac{\eps^{n+1}}{4^{n+2}} \quad \text{and} \quad \|x_n^*-x_{n+1}^*\| \leq \frac{\eps^{n+1}}{4^{n+2}}.
$$
Putting $x_n \to x_\infty$ and $x_n^* \to x_\infty^*$, we will see our condition is satisfied.

First,
$$
\|S-T\| \leq \sum_{n=0}^\infty \frac{\eps^{n+1}}{4^{n+1}} \leq \frac{\eps}{2} < \eps
$$
is clear. Next, if we set $(z,z^*) \in \Pi(X)$ by $z=x_\infty$ and $z^*= \frac{x_\infty^*}{x_\infty^*(x_\infty)}$, then we have
$$
c(S) = \lim_n c(T_n) = \lim_n |x_n^*(T_nx_n)| = |z^*(Sz)|,
$$
$$
\|z-x\| \leq \sum_{n=0}^\infty \frac{\eps^{n+1}}{4^{n+2}} \leq \frac{\eps}{2} < \eps
$$
and
\begin{align*}
\|z^*-x^*\| &\leq \left|1-\frac{1}{x_\infty^*(x_\infty)}\right| + \sum_{n=0}^\infty \frac{\eps^{n+1}}{4^{n+2}} \\
&\leq |x_\infty^*(x_\infty)-x^*(x_\infty)| + |x^*(x_\infty) - x^*(x)| + \frac{\eps}{2} \\
&< \frac{\eps}{4} + \frac{\eps}{4} + \frac{\eps}{2} = \eps,
\end{align*}
which finishes the proof.
\end{proof}

\section{Questions \& Remaining Problems}\label{section:question}

In this section, we discuss some difficulties and considerable problems dealing with the denseness of Crawford number attaining operators.

\subsection{Denseness of adjoint operators}

According to Proposition \ref{prop:CNA-equivalent}, it is possible to characterize the Crawford number attainment for adjoint operators in terms of existence of sequences with certain conditions. However, the minimum anaologue might not work because the change of positions on assertions in Proposition \ref{prop:CNA-equivalent} induces a sign-related problem. So our main question remaining is the following.

\begin{question}\label{question:adjoint-dense}
Let $X$ be a Banach space. Is the set
$$
\{T \in \Lin(X): T^* \in \CNA(X^*)\}
$$
dense in $\Lin(X)$?
\end{question}

\subsection{Property $\alpha$}

Our next approach is to determine the denseness of Crawford number attaining operatos when $X$ has so-called property $\alpha$. A Banach space $X$ is said to have \emph{property $\alpha$} if there exist $\{x_\alpha\}_{\alpha \in I} \subseteq S_X$ and $\{x_\alpha^*\}_{\alpha \in I} \subseteq S_{X^*}$ with a constant $0 \leq \rho <1$ satisfying that
\begin{enumerate}
\item[\textup{(i)}] $x_\alpha^*(x_\alpha)=1 \quad \text{for all } \alpha \in I$,
\item[\textup{(ii)}] $|x_\alpha^*(x_\beta)| \leq \rho <1 \quad \text{with } \alpha,\beta \in I \text{ with } \alpha \neq \beta$,
\item[\textup{(iii)}] $B_X = \overline{\co}\{x_\alpha\}_{\alpha \in I}$, where $\overline{\co}$ denotes the closed convex hull of a set.
\end{enumerate}

One can easily show that $B_X = \overline{\co}(S)$ for some uniformly strongly exposed set $S$ (see \cite{L} for the definition of a uniformly strongly exposed set) if $X$ has property $\alpha$. Thus by following the repertoire of \cite{A1}, we may find a way to show the denseness of Crawford number attaining operators when $X$ has property $\alpha$. The main difference here, compared to ones in \cite{A1}, would be the following one.

\begin{question}
Let $X$ be a Banach space and $S \subseteq B_X$ be a subset such that $B_X=\overline{\co}(S)$. Given $\eps>0$ and $T \in \Lin(X)$, does
$$
c(T) = \inf \{ |x^*(Tx)|: (x,x^*) \in \Pi(X),\, \dist(x,S)<\eps \}
$$
hold?
\end{question}

If we are able to solve the above problem, we can prove the following desired question, arguing as same as in the proof of main theorem in \cite{A1}.

\begin{question}
Let $X$ be a Banach space such that $B_X = \overline{\co}(S)$ where $S$ is a uniformly strongly exposed set. Is $\CNA(X)$ dense in $\Lin(X)$?
\end{question}

\subsection{Property $\beta$}

Next, a consideration of the dual notion of property $\alpha$ introduced in \cite{L}, which is called a property $\beta$, can be also raised. A Banach space $X$ is said to have \emph{property $\beta$} if there exist $\{x_\alpha\}_{\alpha \in I} \subseteq S_X$ and $\{x_\alpha^*\}_{\alpha \in I} \subseteq S_{X^*}$ with a constant $0 \leq \rho <1$ satisfying that
\begin{enumerate}
\item[\textup{(i)}] $x_\alpha^*(x_\alpha)=1 \quad \text{for all } \alpha \in I$,
\item[\textup{(ii)}] $|x_\alpha^*(x_\beta)| \leq \rho <1 \quad \text{with } \alpha,\beta \in I \text{ with } \alpha \neq \beta$,
\item[\textup{(iii)}] $\|x\| = \sup_{\alpha \in I} |x_\alpha^*(x)| \quad \text{for all } x \in X$.
\end{enumerate}

Here, since the minimum analogue lacks the convexity, we do not know whether the following question can be solved in a similar way.

\begin{question}
Let $X$ be a Banach space with property $\beta$ and $\{x_\alpha\}, \{x_\alpha^*\}$ be corresponding sets. For $T \in \Lin(X)$, do we have
$$
c(T) = \inf \{|x_\alpha^*(Tx)| : \alpha \in I, (x,x_\alpha^*) \in \Pi(X) \}?
$$
\end{question}

Recall that we were able to show the minimum analogue of the Bishop-Phelps theorem for bounded closed convex sets in Proposition \ref{prop:min-attaining}. With the aid of this result, one may prove the following question if we can show the preceding inquiry.

\begin{question}\label{question:beta}
Let $X$ be a real Banach space with property $\beta$. Is $\CNA(X)$ dense in $\Lin(X)$?
\end{question}

\subsection{The Bishop-Phelps-Bollob\'as property}\label{subsection:BPBp}

In Proposition \ref{prop:weak-BPBp}, we have shown that for a special case of Banach space the weak version of the Bishop-Phelps-Bollob\'as property for the Crawford number holds. However, there is a difficulty in extending this result to the analogue of the Bishop-Phelps-Bollob\'as property for the Crawford number, as we cannot think of a kind of ``numerical index" in terms of the Crawford number (see \cite[Proposition 6]{KLM2}).

\begin{question}
Let $X$ be a uniformly convex and uniformly smooth Banach space, possibly with an additional assumption. Then for every $\eps>0$, does there exist $\eta(\eps)>0$ such that whenever $T \in \Lin(X)$ with $c(T)=1$ and $(x,x^*) \in \Pi(X)$ satisy that $|x^*(Tx)|<1+\eta(\eps)$, we can find $S \in \CNA(X)$ and $(z,z^*) \in \Pi(X)$ such that
$$
|z^*(Sz)|=c(S)=1, \quad \|S-T\|<\eps, \quad \|z-x\|<\eps \quad \text{and} \quad \|z^*-x^*\|<\eps?
$$
\end{question}

\subsection{Stability on the denseness}

Started from \cite{P}, there have been many efforts for instance in \cite{AAP,CMM,KLM1} to construct an operator which cannot be approximated by numerical radius attaining operators. However, all of those counterexamples are based on a non-injective operator, so we cannot apply the same kind of argument to the case of the Crawford number attaining operators. On the other hand, it was shown in \cite{C2} that there exists $T \in \Lin(c_0,Z)$ which cannot be approximated by minimum norm attaining operators where $Z$ is a stricty convex renorming of $c_0$. This example heavily depends on the property of isomorphism between $c_0$ and $Z$, so neither it can be applied to our case. Such difficulty gives that the following very natural question remains open.

\begin{question}
Is the set $\CNA(X)$ dense in $\Lin(X)$ for any Banach space $X$?
\end{question}


\begin{thebibliography}{99}

\bibitem{A1} \textsc{M.~D.~Acosta}, Denseness of numerical radius attaining operators - renorming and embedding results, \emph{Indiana Univ. J. Math.} \textbf{40} (1991), 903--914.

\bibitem{A2} \textsc{M.~D.~Acosta}, Every real Banach space can be renormed to satisfy the denseness of numerical radius attaining operators, \emph{Israel J. Math.} \textbf{81} (1993), 273--280.

\bibitem{AAP} \textsc{M.~D.~Acosta, F.~J.~Aguirre, and R.~Pay\'a}, A space by W. Gowers and new results on norm and numerical radius attaining operators, \emph{Acta Universitatis. Mathematica et Physica} \textbf{33} (1992), 5--14.

\bibitem{AP} \textsc{M.~D.~Acosta and R.~Pay\'a}, Numerical radius attaining operators and the Radon-Nikod\'ym property, \emph{Bull. London Math. Soc.} \textbf{25} (1993), 67--73.

\bibitem{BR} \textsc{N.~Bala and G.~Ramesh}, A Bishop-Phelps-Bollob\'as type property for minimum attaining operators, \emph{Operators and Matrices} \textbf{15} (2021), 497--513.

\bibitem{BP} \textsc{E.~Bishop and R.~R.~Phelps}, The support functionals of a convex set, \emph{Proc. Sympos. Pure Math.} \textbf{7} (1963), 27--35.

\bibitem{B1} \textsc{B.~Bollob\a's}, An extension to the theorem of Bishop and Phelps, \emph{Bull. London. Math. Soc.} \textbf{2} (1970), 181--182.

\bibitem{B2} \textsc{J.~Bourgain}, On dentability and the Bishop-Phelps property, \emph{Israel J. Math.} \textbf{28} (1977), 265--271.

\bibitem{CMM} \textsc{\'A.~Capel, M.~Mart\'in, and J.~Mer\'i}, Numerical radius attaining compact linear operators, \emph{J. Math. Anal. Appl.} \textbf{445} (2017), 1258--1266.

\bibitem{CN} \textsc{X.~Carvajal and W.~Neves}, Operators that attain their minima, \emph{Bull Braz Math Soc, New Series} \textbf{45} (2014) 293--312.

\bibitem{C1} \textsc{U.~S.~Chakraborty}, Some Bishop-Phelps-Bollob\'as type properties in Banach spaces with respect to minimum norm of bounded linear operators, \emph{Ann. Funct. Anal.} \textbf{12:46} (2021).

\bibitem{C2} \textsc{U.~S.~Chakraborty}, Some remarks on minimum norm attaining operators, \emph{J. Math. Anal. Appl.} \textbf{492} (2020), 124492.

\bibitem{C3} \textsc{C.~R.~Crawford}, A stable generalized eigenvalue problem, \emph{SIAM J Number. Anal.} \textbf{13} (1976), 854--860.

\bibitem{DU} \textsc{J.~Diestel and J.~J.~Uhl}, \emph{Vector Measures}, Math. Surveys \textbf{15}, AMS, Providence, 1977.

\bibitem{KLM1} \textsc{S.~K.~Kim, H.~J.~Lee, and M.~Mart\'in}, On the Bishop-Phelps-Bollob\'as theorem for operators and numerical radius, \emph{Studia Math.} \textbf{233} (2016), 141--151.

\bibitem{KLM2} \textsc{S.~K.~Kim, H.~J.~Lee, and M.~Mart\'in}, On the Bishop-Phelps-Bollob\'as property for numerical radius, \emph{Abstract and Applied Analysis.} Article ID 479208 (2014), 1--15.

\bibitem{KLV} \textsc{D.~Kressner, D.~Lu, and B.~Vandereycken}, Subspace acceleration for the Crawford number and related eigenvalue optimization problems, \emph{SIAM J. Matrix Anal. Appl.} \textbf{39} (2018), 961--982.

\bibitem{L} \textsc{J.~Lindenstrauss}, On operators which attain their norm, \emph{Israel J. Math.} \textbf{1} (1963), 139--148.

\bibitem{LT} \textsc{J.~Lindenstrauss and L.~Tzafriri}, Classical Banach spaces I: Sequence spaces, \emph{Springer-Verlag}, Berlin 1977.

\bibitem{P} \textsc{R.~Pay\'a}, A counterexample on numerical radius attaining operators, \emph{Israel J. Math} \textbf{79} (1992), 83--101.

\bibitem{SMBP} \textsc{D.~Sain, A.~Mal, P.~Bhunia, and K.~Paul}, On numerical radius and Crawford number attainment sets of a bounded linear operator, \emph{J. Convex Anal.} \textbf{28} (2021), 67--80.

\bibitem{SPM} \textsc{D.~Sain, K.~Paul, and K.~Mandal}, On two extremum problems related to the norm of a bounded linear operator, \emph{Operators and Matrices} \textbf{13} (2019), 421--432.

\bibitem{S} \textsc{C.~Stegall}, Optimization and differentiation in Banach spaces, \emph{Linear. Alg. Appl.} \textbf{84} (1986), 191--211.

\end{thebibliography}
\end{document}